\newtheorem{thm}{Theorem}[section]
\newtheorem{lem}[thm]{Lemma}
\newtheorem{cor}[thm]{Corollary}
\newtheorem{rmk}[thm]{Remark}
\newtheorem*{proposition*}{Proposition}
\numberwithin{equation}{section}
\newcommand{\norm}[1]{\lVert #1 \rVert}
\title{On inverse problems in multi-population aggregation models}
\author[1,*]{Yuhan Li}
\author[1,$\dagger$]{Hongyu Liu}
\author[1,$\natural$]{Catharine W. K. Lo}
\affil[1]{Department of Mathematics, City University of Hong Kong}
\affil[*]{yuhli2-c@my.cityu.edu.hk}
\affil[$\dagger$]{hongyu.liuip@gmail.com, hongyliu@cityu.edu.hk}
\affil[$\natural$]{wingkclo@cityu.edu.hk}
\date{}
\begin{document}
\maketitle

\begin{abstract}
This paper focuses on inverse problems arising in studying multi-population aggregations. The goal is to reconstruct the diffusion coefficient, advection coefficient, and interaction kernels of the aggregation system, which characterize the dynamics of different populations. In the theoretical analysis of the physical setup, it is crucial to ensure non-negativity of solutions. To address this, we employ the high-order variation method and introduce modifications to the systems. Additionally, we propose a novel approach called transformative asymptotic technique that enables the recovery of the diffusion coefficient preceding the Laplace operator, presenting a pioneering method for this type of problems. Through these techniques, we offer comprehensive insights into the unique identifiability aspect of inverse problems associated with multi-population aggregation models. 
~\\\\
\textbf{Keywords:} Inverse multi-population aggregation model; positive solutions;  unique identifiability; transformative asymptotic technique; high-order variation method.~\\
\textbf{2020 Mathematics Subject Classification:} 35R30, 35B09, 35K45, 35Q92, 92-10, 92D25, 92D50, 35B10, 35C20
\end{abstract}

{\centering \section{INTRODUCTION}  \label{intro} }
\subsection{Problem setup and background}\label{sec1_1}
In the natural world, we often use collective nouns to depict the characteristics of groups formed by different species. For instance, when referring to hyenas, we use the term `a clan of hyenas'. Similarly, we describe sharks as `a school of sharks'. Likewise, we use `a herd of cattle' instead of `a swarm of cattle' and `a sloth of bears' rather than `a flock of bears'. It is interesting to note that all of these animals mentioned above exhibit gregarious behavior, where the ability to aggregate is both important and habitual. This raises intriguing questions: Do different social animals share common traits? Why do we employ various quantifiers to describe them (cf.  \cite{cantrell2004, eftimie2018, edelstein2005, maini2001, jin2022global})? Interestingly, this phenomenon extends to the microscopic level as well; see e.g \cite{parrish1999,deisboeck2001,chowdhury2005} and the references cited therein. A cell may pass information to another through direct physical contact of the specialised molecules on its surface, or modify its motility based on the molecular signals released by another cell.

Organisms, whether animals, plants, or microorganisms, exhibit two distinct types of behaviors related to aggregative and movement: aggregation and movement in response to external environmental factors such as chemicals, and aggregation and movement resulting from interactions with individuals of the same species (self-organizing behaviors) \cite{eftimie2018}.  This study specifically focuses on analyzing self-organizing behaviors, which encompass a fascinating array of processes wherein complex patterns and structures arise from the interactions among individual components or agents. In mathematical biology, this refers to the spontaneous formation of spatial patterns, collective behavior, or functional organization without the need for external control or central coordination \cite{townes1955, steinberg, sumpter2010}. Instead, these patterns and behaviors arise from local interactions and feedback mechanisms among the system's components. This concept mirrors the self-organizing behaviors observed in nature, where intricate structures and dynamics emerge from the interactions of simple entities.

Up to now, there are several mathematical models proposed to study this self-organizing phenomena, providing a means to describe and analyze the complex dynamics of biological systems, capturing the interplay between various factors such as diffusion, advection, reaction kinetics, and feedback loops. These models often take the form of partial differential equations, stochastic processes, or agent-based simulations. Moreover, since repulsion act over shot distances while attraction act over large spatial distances, many mathematical models depicting self-organizing biological aggregations are non-local \cite{eftimie2018}.

In the study of self-organizing phenomena, non-local advection-diffusion equations have, in particular, proven to be a valuable tool for gaining insights and understanding. These equations incorporate both advection, representing the movement of substances or agents, and diffusion, accounting for their spreading or dispersal. The inclusion of non-local terms allows for the modeling of long-range interactions and memory effects, capturing the influence of distant points on the current state of the system. One classic advection-diffusion equation is of the following form:
\begin{equation}\label{class_eq1}
    \partial_t u(x,t)= \nabla \cdot[D\nabla u(x,t)-\mathbf{a} u(x,t)],
\end{equation}
where $u(x,t)$ represents the density of some population at position $x\in\Omega\subset\mathbb{R}^n$ and $t\in [0,\infty).$ $D$ is the diffusive component of movement, and $\mathbf{a}$ is an $n-$dimensional vector measuring the advective component of movement. The region $\Omega$ defines the region in which the population species moves.

The classic model \eqref{class_eq1} serves as a foundation for deriving different models that describe various scenarios. In the case of cellular systems, it is necessary to construct models which incorporate adhesion, which is the fundamental mechanism by which cells attach to and interact with their surrounding environment.  These models often originate from a random walk description of movement \cite{anguige2009,johnston2012} or are proposed based on empirical observations \cite{hofer1995}. In contrast, animals exhibit social interactions that lead to aggregation. At the most basic level, social interactions among animals can influence their spatial distribution, causing them to concentrate in specific areas rather than dispersing throughout the entire available space \cite{topaz2006, ballerini2008,cantrell2020}. For example, penguins exhibit social interactions within their colonies, forming dense aggregations in particular regions. These colonies provide protection, warmth, and opportunities for mating and raising offspring. The social interactions among penguins result in a concentrated distribution within the colony, rather than dispersing across the entire available habitat \cite{wilson2009, ancel2015}. By forming specific areas of concentration, animals can benefit from cooperative behaviors, enhanced protection, and efficient resource utilization within the social group. 

In this context, a question arises: as the specific area is fixed and various species lived there, an interaction range is generated, enabling individuals to sense multiple neighbours at the same time. Therefore, it is reasonable to infer that the movement of the individual will rely on an integrated response, specifically aligned with the distribution of the population (or populations) within its range of interaction. Non-local PDE formulations play a significant role in studying such phenomena and we consider the following pair of models to reflect such behavior:
\begin{equation}\label{convo1a}
    \partial_{t}u=d\Delta u -\mu \nabla \cdot[u \mathbf{k}_R \ast f], \quad \mathbf{k}_R \ast f(x,t)=\int_{\mathbb{T}^{n}} \mathbf{k}_R(x,y) f(u(y,t)) dy,
\end{equation}
\begin{equation}\label{convo1b}
    \partial_{t}u=d\Delta u -\nu \nabla \cdot[u\nabla (w_R \ast g)], \quad w_R \ast g(x,t)=\int_{\mathbb{T}^{n}} w_R(x,y) g(u(y,t)) dy.
\end{equation}

For the first model \eqref{convo1a}, the non-local advection term is based on the principle that the population at location $y$ affects the population movement at location $x.$ The integral kernel $\mathbf{k}_R $ is parametrised according to a sampling radius $R,$ which represents the interaction range. Here, $R$ is a vector which not only specifies the distance from $y$ to $x,$ but also determines the direction of interaction. $f(x,t)$ depicts the dependence on the population size at $y.$ The physical meaning of \eqref{convo1a} can be interpreted in the sense of forces and energies. From a cellular perspective, \eqref{convo1a} arises from the delicate interplay between adhesion and repulsion forces acting on the cell surface. The interaction between cells, centered at $x$ and $y$, gives rise to localized forces. The net force experienced by the cell is determined by the convolution $\mathbf{k}_R \ast f$. On the other hand, in the context of animal studies, \eqref{convo1a} has been developed to describe observed swarming-like behavior \cite{kawasaki1978, mimura1982, nagai1983}.

For the second model \eqref{convo1b}, the function $w_R(x,y)$ and integral $w_R \ast g$ are now scalar-valued, but the phenomenological motivation is similar to that of \eqref{convo1a}. Here, $d\in \mathbb{R}^{+}$ is the diffusion rate, and $\nu \in \mathbb{R}$ represents the advection coefficient. Equation \eqref{convo1b} can be interpreted as a model that describes the movement of a population influenced by the gradient of its non-local measure. Following the least energy principle, $w_R * g$ represents the energy density, and $\nabla (w_R \ast g)$ signifies the movement based on the energy gradient. When $\nu>0$, this reduces to the energy minimization problem \cite{carrillo2014}. 

Both models \eqref{convo1a} and \eqref{convo1b} have played a crucial role in the realm of ecological systems since the 1970s (see \cite{kawasaki1978, grunbaum1994, okubo2001}), and in the domain of cellular systems since the 1990s (see \cite{sekimura1999, gerisch2008}). These models, commonly referred to as aggregation equations, hold immense value due to their ability to capture self-organizing phenomena. By modeling the process of self-attraction among individuals, these equations enable dispersed groups to spontaneously organize into one or more aggregated groups.

It is important to highlight that these equations are not limited in their applications. They can also account for repulsive interactions, which promote enhanced dispersal. Furthermore, they can be employed to describe heterogeneous populations, where interactions between different populations vary. In such cases, these equations can be combined with more intricate models to capture the complexity of the system. As a result, there have been multi-species adaptations of both \eqref{convo1a} and \eqref{convo1b} to cater to these diverse scenarios.

The multi-species models are given as follows: Let $x\in\mathbb{T}^n$ denote the state variable, $t\in [0,\infty)$ denote the time variable, and $\mathbb{T}^{n}=[-L_1,L_1]\times [-L_2,L_2] \times \cdots \times [-L_n,L_n]$ be the $n$-torus with periodic boundaries. The $N$-species non-local population models we are interested in take the forms:
\begin{equation}\label{multispecies1}
    \partial_{t}u_{i}=d_{i}\Delta u_{i} - \sum\limits^{N}_{j=1}\mu_{ij}\nabla \cdot (u_i \mathbf{k}_{ij}\ast f_{ij}), \quad \mathbf{k}_{ij}\ast f_{ij}=\int_{\mathbb{T}^{n}} \mathbf{k}_{ij}(x,y) f_{ij}(\mathbf{u}(y,t))dy,
\end{equation}
\begin{equation}\label{multispecies2}
    \partial_{t}u_{i}=d_{i}\Delta u_{i}- \sum\limits^{N}_{j=1}\nu_{ij}\nabla \cdot [u_i \nabla (w_{ij}\ast g_{ij})], \quad w_{ij}\ast g_{ij}\int_{\mathbb{T}^{n}} w_{ij}(x,y) g_{ij}(\mathbf{u}(y,t))dy,
\end{equation}
where $i=1,\dots,N,$ $\mathbf{u}(x,t)= \big( u_1(x,t), u_2(x,t), \dots, u_N(x,t)\big),$ $u_i$ denotes the density distribution of $i$-th population. $d_i$ represents the diffusion coefficient and $\mu_{ij}, \nu_{ij}$ are the advection coefficients. 

The generalization of biological models to accomodate for multiple species offers a broader perspective and a more comprehensive understanding of ecological and cellular systems. By transforming \eqref{convo1a} and \eqref{convo1b} into \eqref{multispecies1} and \eqref{multispecies2}, researchers can explore the intricate dynamics and interactions between different populations. From a biological perspective, the advantage of such multi-species consideration is twofold. 
Firstly, it allows for the examination of inter-species relationships and their impact on the overall system behavior \cite{van2014}. This includes studying predator-prey interactions, competition for resources, symbiotic relationships, and other forms of ecological or cellular interactions. By considering multiple species, the models can capture the complexity and interdependence of these relationships, providing insights into the stability, coexistence, or potential disruptions within the system. 
Secondly, the generalization to multi-species enables the exploration of emergent properties and collective behaviors that arise from interactions between different species \cite{aschenbrenner2016, elias2012}. These emergent properties may include pattern formation, spatial organization, synchronization, or cooperative behaviors. By incorporating multiple species, the models can capture the synergistic effects and feedback mechanisms that give rise to these emergent phenomena. 

An interesting observation to note is that the diffusion coefficient $\mathbf{d}$ in equations \eqref{multispecies1} and \eqref{multispecies2} can represent two distinct scenarios: a single diffusion rate for all species ($\mathbf{d}=(d,\dots,d)$) or different diffusion rates assigned to each species ($\mathbf{d}=(d_1,\dots,d_N)$). Both of these settings have their own physical interpretations. For instance, consider several bird species in a forest with similar flying capabilities and habitats. They may have comparable diffusion rates, leading to similar movements, dispersal patterns, and coverage of distances and areas over time. On the other hand, a fast-swimming fish and a slow-moving crustacean living in the same ocean area have different diffusion rates, reflecting their distinct abilities to disperse and occupy different areas within their marine environment.

Therefore, the use of multi-species generalization in biological models enhances our understanding of the intricate dynamics and interactions within ecological and cellular systems. It allows for a more realistic representation of the natural world, providing valuable insights into the stability, coexistence, and emergent properties of diverse species within these systems. In this paper, we mainly consider the inverse problems of determining the diffusion coefficients $\mathbf{d}(x):= \{d_1(x),d_2(x),\dots, d_N(x)\}$ and integral interaction kernels $\mathbf{k}(x):=\{\mathbf{k}_{ij}(x)\}, \mathbf{w}(x):= \{w_{ij}(x)\}$, all depending only on $x$ and independent of $t$, in models based on \eqref{multispecies1} and \eqref{multispecies2}, which are of the following forms:
\begin{equation}\label{adoptedmodel1}
\begin{cases}
    \partial_{t}u_{i}=d_{i}\Delta u_{i} + \nabla \cdot \big( h(u_i) \sum\limits^{N}_{j=1}\mu_{ij}(\mathbf{k}_{ij}\ast u_j)\big),  &\  \text{in}\  Q,\\ 
    u_i(x,0)=f_i(x), &\  \text{in}\  \mathbb{T}^{n},\\
\end{cases}
\end{equation}
where $ \mathbf{k}_{ij}\ast u_j=\int_{\mathbb{T}^{n}} \mathbf{k}_{ij}(x-y) u_j(y)dy,$ and 
\begin{equation}\label{adoptedmodel2}
\begin{cases}
    \partial_{t} u_{i}=d_{i}\Delta u_{i} + \nabla \cdot \big( h(u_i) \sum\limits^{N}_{j=1} \nu_{ij}\nabla (w_{ij}\ast u_j) \big),   &\  \text{in}\  Q,\\ 
    u_i(x,0)=f_i(x), &\  \text{in}\  \mathbb{T}^{n},\\
\end{cases}
\end{equation}
where $ w_{ij}\ast u_j(x)=\int_{\mathbb{T}^{n}} w_{ij}(x-y) u_j(y)dy$.

The different species are represented by $i=1,2,\dots, N$, and the function $h(u_i)=u_i$ if $u_i \geq 0$ and $h(u_i)=0$ if $u_i <0$ is essential to ensure that the model is physically meaningful, as we will discuss in Section \ref{tech}. The interaction kernel $\mathbf{k}_{ij}$ describes the non-local sensing of species $j$ by species $i$ and is differentiable, and the interaction potential $w_{ij}$ describes the non-local sensing of species $j$ by species $i$ and is a twice-differentiable function such that $w_{ij} \geq 0$ is a non-increasing function of $\vert x \vert$ with $\nabla w_{ij}$ bounded. 

To recover the unknown coefficients, we assume that we can measure $\mathbf{u}(x,T)$ as well as $\mathbf{u}(x,t)$ in an accessible region $x\in\omega\Subset\mathbb{T}^{n}$ for all possible initial configurations $\mathbf{u}(x,0),$ that is, we have data from the maps: 
\begin{equation}\label{mmap1}
    \mathcal{M}^{+}_{\mathbf{d}}\big( \mathbf{u}(x,0) \big)=\mathbf{u}(x,t), \quad x\in\omega\Subset\mathbb{T}^{n}, t\in(0,T) 
\end{equation}
and
\begin{equation}\label{mmap2}
    \mathcal{M}^{+}_{\pmb{\mu},\mathbf{k}}\big( \mathbf{u}(x,0) \big)=\mathbf{u}(x,T),\quad \mathcal{M}^{+}_{\pmb{\nu},\mathbf{w}}\big( \mathbf{u}(x,0) \big)=\mathbf{u}(x,T),
\end{equation}
where the sign `$+$' signifies that the data are associated with the non-negative solutions of the models \eqref{adoptedmodel1} and \eqref{adoptedmodel2}.

Here are the questions we are interested in:

$\textbf{Case 1.}$  Let $\mathcal{M}^{+}_{\mathbf{d}^1}$ and $\mathcal{M}^{+}_{\mathbf{d}^2}$ be the measurement maps corresponding to the unknown coefficients $\mathbf{d}^1$, $\mathbf{d}^2$ in \eqref{adoptedmodel1} or \eqref{adoptedmodel2} respectively. If $\mathcal{M}^{+}_{\mathbf{d}^1}=\mathcal{M}^{+}_{\mathbf{d}^2}$ holds under appropriate assumptions, can we establish the unique identifiability conclusion $\mathbf{d}^1=\mathbf{d}^2$?

$\textbf{Case 2.}$ Let $\mathcal{M}^{+}_{\pmb{\mu}^1,\mathbf{k}^1}$ and $\mathcal{M}^{+}_{\pmb{\mu}^2,\mathbf{k}^2}$ be the measurement maps  corresponding to the unknown coefficients $(\pmb{\mu}^1,\mathbf{k}^1)$ and $(\pmb{\mu}^2,\mathbf{k}^2)$ in \eqref{adoptedmodel1} respectively. If $\mathcal{M}^{+}_{\pmb{\mu}^1,\mathbf{k}^1}=\mathcal{M}^{+}_{\pmb{\mu}^2,\mathbf{k}^2}$ holds under appropriate assumptions, can we establish the unique identifiability conclusion for the advection coefficients $\pmb{\mu}^1=\pmb{\mu}^2$? Can we give the same conclusion for the integral interaction kernels $\mathbf{k}^1$ and $\mathbf{k}^2$?

$\textbf{Case 3.}$ Let $\mathcal{M}^{+}_{\pmb{\nu}^1,\mathbf{w}^1}$ and $\mathcal{M}^{+}_{\pmb{\nu}^2,\mathbf{w}^2}$ be the measurement maps  corresponding to the unknown coefficients $(\pmb{\nu}^1,\mathbf{w}^1)$ and $(\pmb{\nu}^2,\mathbf{w}^2)$ in \eqref{adoptedmodel2} respectively. If $\mathcal{M}^{+}_{\pmb{\nu}^1,\mathbf{w}^1}=\mathcal{M}^{+}_{\pmb{\nu}^2,\mathbf{w}^2}$ holds under appropriate assumptions, can we establish the unique identifiability conclusion for the advection coefficients $\pmb{\nu}^1=\pmb{\nu}^2$? Can we establish the same conclusion for the integral interaction kernels $\mathbf{w}^1=\mathbf{w}^2$?

One will see later that the mission for recovering diffusion rate $\mathbf{d}$ are alike in both models, so we summarize them together in the first case. We also focus on the unique identifiability issues for each model respectively, specifically, whether we can recover the advection coefficient $\pmb{\mu}$ or the integral kernel $\mathbf{k}$ in \eqref{adoptedmodel1}, and whether we can recover the advection coefficient $\pmb{\nu}$ or the integral kernel $\mathbf{w}$ in \eqref{adoptedmodel2}. The main results will be given in detail in Section \ref{mainrs}.

\subsection{Technical developments}\label{tech}
Inverse problems related to parabolic and hyperbolic equations have garnered significant attention in recent years, as evidenced by notable works such as \cite{isakov1993, pilant1986, lin2022simu} in the context of parabolic-type equations, and \cite{lin2024, lassas2018, wang2019} in the realm of hyperbolic-type equations. Given the diverse range of physical systems that can be described by nonlinear PDEs, it is pertinent to consider the inverse problems associated with these applications. One prominent example of such applications is in mean field game theory \cite{klibanov2023, klibanov2023coefficient, liu2023ipmfg,liu2023simu,DL23mfg,ren2023unique}, which has a distinct physical context but involves coupled nonlinear PDEs that bear resemblance to the ones encountered in mathematical biology models. Unique identifiability results are established for either the running cost or the terminal cost based on knowledge of the total cost in \cite{liu2023ipmfg,DL23mfg}, assuming the Lagrangian represents the kinetic energy. Inverse boundary problems for the mean field game system are also considered in \cite{liu2024inverse,liu2023simu}.

Another field in which inverse problems find extensive application is population ecology, where numerous parabolic PDE systems are employed to explain various phenomena in nature. The first such work was \cite{liulo2024}, wherein the authors considered the non-negativity of solutions for the biological inverse problem, using a linearization method called high-order variation. Under the same restriction, \cite{dingliu2023} proposed and studied several inverse problems for identifying unknown coefficients in a class of biological models, by making use of averaged-out boundary data. 
Later, in \cite{lilo2023}, the authors investigated the inverse problem of determining the coefficients of interaction terms within Lotka-Volterra models, with support from boundary observations of non-negative solutions. The results extend to Holling-Tanner type models as well as models reflecting the hydra effect. Additionally, \cite{lilo2024simu} presents an attempt to address chemotaxis models within the context of inverse problems.

For these biological applications, the assurance of the non-negativity constraint is crucial, since species populations or cellular densities cannot be negative. And the high-order variation method, which is first introduced in \cite{liu2023simu, liulo2024}, has been widely used in recent works to guarantee that this constraint is fulfilled. The main idea of this method relies on a special setting for Taylor expansion,
\begin{equation}\label{characv}
\mathbf{u}(x;\varepsilon)=\sum\limits^{\infty}_{l=1}\varepsilon^{l}\mathbf{f}_{l} \  \  \text{on}  \,  \   \mathbb{T}^{n} \  \  \text{for} \,   \  \mathbf{f}_1 \geq 0,
\end{equation}
where $\varepsilon$ is a small positive variable, and the dominant term $\mathbf{f}_1$ determines and ensures the non-negativity of $\mathbf{u}(x; \varepsilon)$. No restrictions for the positivity of $\mathbf{f}_i, i \geq 2$ are required.
Similar to \cite{liulo2024}, we will carry out the linearization around $\mathbf{0}$ in the recovery of the advection coefficients and the integral interaction kernels. More details will be given in Section \ref{hovaria}.

Moreover, in order to make our model more versatile, we modify the model slightly in the form of \eqref{adoptedmodel1} and \eqref{adoptedmodel2}, while accounting for the non-negativity constraint. We have included a function $h(\cdot)$, which plays a role in avoiding the possibility of a negative solution for the models, in the form of 
\[    \partial_{t}u_{i}=d_{i}\Delta u_{i} + \nabla \cdot \big( h(u_i) \sum\limits^{N}_{j=1}\mu_{ij}(\mathbf{k}_{ij}\ast u_j)\big),  \quad
    \partial_{t} u_{i}=d_{i}\Delta u_{i} + \nabla \cdot \big( h(u_i) \sum\limits^{N}_{j=1} \nu_{ij}\nabla (w_{ij}\ast u_j) \big),   \  \text{in}\  Q.\] 
    When the solutions are non-negative, $h(u) =u$, and the models  \eqref{adoptedmodel1} and \eqref{adoptedmodel2} reduce to
\begin{equation}\label{origin1}
    \partial_{t}u_{i}=d_{i}\Delta u_{i} + \nabla \cdot \big( u_i \sum\limits^{N}_{j=1}\mu_{ij}(\mathbf{k}_{ij}\ast u_j)\big),
\end{equation}
\begin{equation}\label{origin2}
    \partial_{t} u_{i}=d_{i}\Delta u_{i} + \nabla \cdot \big( u_i \sum\limits^{N}_{j=1} \nu_{ij}\nabla (w_{ij}\ast u_j) \big), 
\end{equation}
which are of similar forms as \eqref{multispecies1} and \eqref{multispecies2}. When $u<0$, $h(u) = 0$ and the main parts of the models vanish, reducing to the simple classic heat equation
\[\partial_{t} u_{i}=d_{i}\Delta u_{i}.\]

For these biological models, we are mainly concerned with the issue of unique identifiability of the associated inverse problems, as discussed in the previous subsection. In terms of unique identifiability, recovering the diffusion coefficient $\mathbf{d}$ poses different challenges compared to recovering the advection coefficients and integral interaction kernels. In wave equations, the unique identifiability of the coefficient preceding the Laplace operator, which represents the wave speed in a medium, has been extensively studied. In these case, the Laplace transform is commonly applied to the entire equation to solve the problem \cite{liu2015determining}. This transform results in a Helmholtz equation, which has well-known solutions allowing for the unique determination and recovery of the wave speed. However, this direct approach does not work for the equations \eqref{adoptedmodel1} and \eqref{adoptedmodel2}. Furthermore, the standard method for elliptic operators using the Liouville transform is also not applicable to parabolic operators due to the absence of easy cancellation of factors after the transformation, particularly involving the time derivative term.

 In this regard, the method proposed in \cite{goncharsky2024} offers us a new idea, where the nonlinear coefficient inverse problem is found to be equivalent to solving a linear Fredholm integral equation of the first kind. Building upon this idea, in this work, we develop a groundbreaking technique, which we call the \emph{transformative asymptotic method}, to address this problem.  This method offers a fresh and innovative perspective on solving the problem and is capable of recovering scenarios with uniform diffusion rates for all species ($\mathbf{d}=(d,\dots,d)$) or varying diffusion rates for each species ($\mathbf{d}=(d_1,\dots,d_N)$). It is important to note, however, that this method is specifically applicable only in two-dimensional spaces. Nevertheless, our technique can be easily extended to encompass a broader range of operators, including operators in divergence form, through a straightforward expansion of the operator. 

Simultaneously, it is important to acknowledge that our work aims to recover functions $\mathbf{k}$ and $\mathbf{w}$ that are involved in complex convolution and difference operators. Furthermore, the main part of the models is also of second-order. These unique forms introduce disturbances and set our models apart from those discussed in \cite{lilo2023, lilo2024simu, liu2023simu}. However, by leveraging the properties of convolution in our proof, we are able to effectively address this challenge.

\sloppy Another point worth mentioning is that our work is the first research addressing the unique identifiability issues for inverse problems of mathematical biology models based on multi-population systems, for any finite number of populations. This aligns more closely with the physical background of complex ecosystems which are composed of numerous species. For instance, ecosystems such as rainforests, deserts, and snow-capped mountains composed of multiple populations intricately interacting together, and it is impossible and physically meaningless to isolate and study individual organisms or pairs of organisms within these environments. 

In summary, we outline the major contributions of this work:
\begin{enumerate}[(1)]
    \item Firstly, we considered the inverse problem for a complex high order multi-population aggregation system, which has numerous applications in biological ecosystems. Furthermore, the model involves a function $h(u)$ which ensures non-negativity of the solutions.
    \item Secondly, we derived the unique identifiability result for the diffusion coefficients in the model, using a completely novel method known as the transformative asymptotic technique. To the best of our knowledge, this is the first work that attempts to recover the diffusion coefficient in parabolic-type equations with periodic boundary conditions.
    \item Thirdly, we obtained uniqueness results for the advection coefficients and interaction terms, which are involved in the higher order part of the operator. We made use of the high-order variation method, which has only been introduced recently, to ensure the non-negativity of the solutions, thereby maintaining that our results are physically realistic.
\end{enumerate}

The remaining sections of the paper are structured as follows. Section \ref{prelims} provides an introduction to the notation used, presents preliminary results related to the forward problems, and states the main results of the paper. In Section \ref{hovaria}, we offer a concise overview of the high-order variation method. The recovery of the diffusion coefficient $\mathbf{d}$ is addressed in Section \ref{Proof_d}, while the recovery for the advection coefficients and integral interaction kernels are presented in Section \ref{Proof_muk} and Section \ref{Proof_nuw}, respectively for the models \eqref{adoptedmodel1} and \eqref{adoptedmodel2}, as well as their respective applications.

{\centering \section{PRELIMINARIES AND STATEMENT OF MAIN RESULTS}  \label{prelims} }
\subsection{Notations and basic settings}
We consider the multi-population mathematical biology models on the $n$-dimensional torus $\mathbb{T}^n,$ whereon any function defined should be $(1,1,\dots,1)$-periodic with respect to $x.$ To be specific, for each $x_i\,  (i=1,2,\dots,n),$ the function would be 1-periodic.

\sloppy Let $\mathbb{N}$ be the set of all non-negative integers. For a $n$-dimensional multi-index $\mathbf{\alpha}=(\alpha_1,\dots,\alpha_{n})\in \mathbb{N}^{n}$ and $x=(x_1,\dots,x_n)\in \mathbb{R}^n,$ we have the following notations:
\[D^{\mathbf{\alpha}}=\partial^{\alpha_1}_{x_1}\partial^{\alpha_2}_{x_2}\cdots \partial^{\alpha_n}_{x_n}, \  \mathbf{\alpha}!= \alpha_1 ! \alpha_2 !\cdots \alpha_n !,\  \vert \mathbf{\alpha} \vert=\sum\limits^{n}_{i=1}\alpha_{i}.\]

Then for $k\in\mathbb{N}$ and $\alpha \in (0,1),$ we define the H\"older space $C^{k+\alpha}(\mathbb{T}^n)$ as the subspace of $C^{k}(\mathbb{T}^n),$ in which the function has $k$ derivatives which are $\alpha$-H\"older continuous for all $\vert \mathbf{\alpha} \vert \leq k.$ The norm is defined as
\begin{equation}\label{define_norm1}
    \Vert u \Vert_{C^{k+\alpha}(\mathbb{T}^n)} := \sum\limits_{\vert \mathbf{\alpha} \vert \leq k} \Vert D^{\mathbf{\alpha}} u \Vert_{L^{\infty}(\mathbb{T}^n)}+\sum\limits_{\vert \mathbf{\alpha} \vert=k} \mathop{\text{sup}}\limits_{x \neq y}\frac{\vert D^{\mathbf{\alpha}} u(x)-D^{\mathbf{\alpha}} u(y)\vert}{\vert x-y \vert ^{\alpha}}.
\end{equation}

\sloppy For functions depending on both the time and space variables,  we define its space as $C^{k+\alpha, \frac{k+\alpha}{2}}(\mathbb{T}^n).$ Denote $Q:=\mathbb{T}^n \times [0,T],$ we say that a function $u \in  C^{k+\alpha, \frac{k+\alpha}{2}}(Q)$ if $D^{\mathbf{\alpha}} \partial^{j}_{t} u$ exists and is $d$-H\"older continuous with exponent $\alpha$ in the space variable and exponent $\frac{k+\alpha}{2}$ in the time variable for all $\mathbf{\alpha} \in \mathbb{N}^n,$ $j \in \mathbb{N},$ with $\vert \mathbf{\alpha} \vert + 2j \leq k.$ The norm of $C^{k+\alpha, \frac{k+\alpha}{2}}(Q)$ is defined as
\begin{equation}\label{define_norm2}
    \Vert u \Vert_{C^{k+\alpha, \frac{k+\alpha}{2}}(Q)} := \sum\limits_{\vert \mathbf{\alpha}+2j \vert \leq k} \Vert D^{\mathbf{\alpha}} \partial^{j}_{t} u \Vert_{L^{\infty}(Q)}+\sum\limits_{\vert \mathbf{\alpha} \vert+2j=k} \mathop{\text{sup}}\limits_{(x,t) \neq (y,s)}\frac{\vert u(x,t)- u(y,s)\vert}{\vert x-y \vert ^{\alpha}+\vert t-s \vert^{\frac{\alpha}{2}}}.
\end{equation}

Since the functions we will be treating are vector-valued in multi-population systems, the H\"older norm of $\mathbf{u}=(u_1,u_2,\dots,u_N)$ is defined as 
\begin{equation}\label{define_norm3}
     \begin{aligned}
         \Vert \mathbf{u} \Vert_{C^{k+\alpha}(\mathbb{T}^n)} & := \sum\limits^{N}_{i=1} \Vert u_i \Vert_{C^{k+\alpha}(\mathbb{T}^n)},\\
    \Vert \mathbf{u} \Vert_{C^{k+\alpha, \frac{k+\alpha}{2}}(Q)} & := \sum\limits^{N}_{i=1} \Vert u_i \Vert_{C^{k+\alpha, \frac{k+\alpha}{2}}(Q)}.
     \end{aligned} 
\end{equation}

\subsection{Well-posedness of the forward problem}\label{Adc}
This subsection aims to discuss the well-posedness of multiple initial-boundary value problems for semi-linear parabolic equations. First, we present a well-posed situation for \eqref{adoptedmodel1}.
\begin{lem}[Theorem 5.1 of \cite{painter2023}]\label{wellposed_eco1}
Consider the system
\begin{equation}\label{in_lem1}
    \begin{cases}
        \partial_t \mathbf{u}= \mathbf{d}\Delta \mathbf{u} - \nabla \cdot \left(\pmb{\mu} \mathbf{u} \left(\frac{x}{|x|} \mathbf{K}(x)*\mathbf{u}(x,t)\right)\right),  &\  \text{in}\  Q,\\ 
        \mathbf{u}(x,0)=\mathbf{f}(x), &\  \text{in}\  \mathbb{T}^{n},\\
    \end{cases}
\end{equation}
where $\mathbf{d}=d I_N$ for constant $d$ and $N\times N$-identity matrix $I_N$. Assume:\par
$(\mathbf{L1})$ for $p \geq 1,$ let $f_i(x) \in X_{p}:= C^0(\mathbb{T}^n)\cap L^{\infty} (\mathbb{T}^n)\cap L^{p} (\mathbb{T}^n)$ be non-negative for $\mathbf{f}=(f_1,\dots,f_N)$;\par
$(\mathbf{L2})$ $\mathbf{K} \in L^1(\mathbb{T}^n),$ $\mathbf{K} \geq 0$.\\
Then there exists a unique, classical, global solution $\mathbf{u}=(u_1,\dots,u_N)$ such that, for all $i=1,\dots,N$,
\[u_i \in C^0([0,\infty);X_p) \cap C^{2,1}(B_R(0) \times (0,\infty)),\]
for a small ball $B_R(0)$ about the origin.
\end{lem}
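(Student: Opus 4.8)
The plan is to recast the system \eqref{in_lem1} in its mild (Duhamel) form and to run the classical three-step programme for semilinear parabolic equations: a contraction-mapping argument for local existence and uniqueness, the parabolic maximum principle for non-negativity, and global-in-time a priori bounds to rule out finite-time blow-up. Write $\{e^{td\Delta}\}_{t\geq 0}$ for the analytic heat semigroup generated by $d\Delta$ on $\mathbb{T}^n$, and abbreviate the nonlocal aggregation velocity, read as convolution against the odd kernel $\tfrac{x}{|x|}\mathbf{K}(x)$, by
\begin{equation}\label{vel_plan}
\mathbf{a}[\mathbf{u}](x,t) := \int_{\mathbb{T}^n} \frac{x-y}{|x-y|}\,\mathbf{K}(x-y)\,\mathbf{u}(y,t)\,dy .
\end{equation}
Since $\mathbf{d}=dI_N$, Duhamel's principle recasts \eqref{in_lem1} as the vector integral equation
\begin{equation}\label{mild_plan}
\mathbf{u}(t) = e^{td\Delta}\mathbf{f} - \int_0^t e^{(t-s)d\Delta}\,\nabla\cdot\big(\pmb{\mu}\,\mathbf{u}(s)\,\mathbf{a}[\mathbf{u}](s)\big)\,ds ,
\end{equation}
and a solution of \eqref{mild_plan} in $C^0([0,\tau];X_p)$ will be the object I construct first.

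First I would prove short-time existence and uniqueness by showing that the map $\Phi$ defined by the right-hand side of \eqref{mild_plan} is a contraction on a small ball of $C^0([0,\tau];X_p)$ for $\tau$ small. The two ingredients are the smoothing estimate $\|\nabla e^{td\Delta}g\|_{L^q}\lesssim (dt)^{-1/2}\|g\|_{L^q}$, whose singularity $t^{-1/2}$ is integrable and produces a gain $C\tau^{1/2}$ after time integration, and Young's convolution inequality $\|\mathbf{a}[\mathbf{u}]\|_{L^\infty}\leq \|\mathbf{K}\|_{L^1}\|\mathbf{u}\|_{L^\infty}$, valid because $|\tfrac{x}{|x|}|=1$ so the effective kernel lies in $L^1$ by $(\mathbf{L2})$. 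Together with the algebra property of $L^\infty$ under products, these show that $\mathbf{u}\mapsto \pmb{\mu}\,\mathbf{u}\,\mathbf{a}[\mathbf{u}]$ is locally Lipschitz from $X_p$ into $L^p$, so that $\Phi$ contracts for $\tau$ depending only on $\|\mathbf{f}\|_{X_p}$. Banach's fixed-point theorem then yields a unique mild solution on a maximal interval $[0,T_{\max})$, with the standard blow-up alternative $T_{\max}<\infty \Rightarrow \limsup_{t\to T_{\max}}\|\mathbf{u}(t)\|_{X_p}=\infty$.

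Next I would freeze the velocity $\mathbf{a}=\mathbf{a}[\mathbf{u}]\in L^\infty$ along the fixed point, so that each $u_i$ solves the linear divergence-form equation $\partial_t u_i = d\Delta u_i - \nabla\cdot(\mu\, u_i\,\mathbf{a})$ with bounded drift; testing against the negative part $u_i^-$ and using $(\mathbf{L1})$, where $f_i\geq 0$, gives $u_i(\cdot,t)\geq 0$ for all $t$, which is exactly the physically required non-negativity. To pass from local to global I must bound $\|\mathbf{u}(t)\|_{X_p}$ on every finite interval. Non-negativity together with the conservation (divergence) form gives mass conservation, $\|u_i(t)\|_{L^1}=\|f_i\|_{L^1}$, hence an a priori $L^1$ bound; combining this with $L^p$ energy estimates that exploit the divergence structure of the drift term, and then a Moser-type iteration, I would propagate the control up to $L^\infty$ uniformly on finite time intervals, which forces $T_{\max}=\infty$. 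Finally, since $\mathbf{a}=(\tfrac{x}{|x|}\mathbf{K})*\mathbf{u}$ inherits continuity, and after one bootstrap step H\"older regularity, from $\mathbf{u}$ even across the origin thanks to the smoothing of the convolution, interior parabolic Schauder estimates upgrade the mild solution to the classical regularity $u_i\in C^{2,1}(B_R(0)\times(0,\infty))$ asserted in the statement.

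The hard part will be the global a priori bound, because aggregation--diffusion systems of Keller--Segel type are well known to concentrate and blow up in finite time once self-attraction dominates diffusion, so global existence cannot follow from soft arguments alone. What makes it work here is precisely hypothesis $(\mathbf{L2})$: an $L^1$, non-negative kernel keeps the aggregation velocity controlled in $L^\infty$ by $\|\mathbf{K}\|_{L^1}\|\mathbf{u}\|_{L^\infty}$, rather than by a negative-order norm that would permit concentration, so the nonlinear feedback between density and drift can be closed through $L^p$ estimates and Gr\"onwall/Moser iteration. Tracking the constants in this self-referential loop so that the $L^\infty$ norm provably stays finite on every finite interval is the technical crux; by contrast the local contraction argument and the Schauder bootstrap are essentially routine.
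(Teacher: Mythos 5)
First, a point of comparison: the paper does not prove Lemma \ref{wellposed_eco1} at all. It is imported verbatim (with a modification) from Theorem 5.1 of \cite{painter2023}, and the discussion following the lemma makes clear that the cited theorem concerns a system in which the convolution acts on a \emph{saturating detection function} $g(\mathbf{u})$ --- bounded, approximately linear for small densities, and vanishing beyond a threshold --- rather than on $\mathbf{u}$ itself. The paper's restatement with $\mathbf{u}$ in place of $g(\mathbf{u})$ is justified only by the remark that the inverse-problem arguments use solutions near $\mathbf{0}$, where $g(\mathbf{u})=\mathbf{u}$. So your proposal is not an alternative route to the paper's proof; it is an attempt to prove, from scratch and for data of arbitrary size, a statement the paper inherits from the literature under a stronger hypothesis.

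That attempt has a genuine gap, and it sits exactly where you flagged the ``technical crux'': the global a priori bound. Your local theory (contraction via semigroup smoothing and Young's inequality, non-negativity by testing with $u_i^-$, mass conservation, Schauder bootstrap) is standard and sound, but the claimed closure of the feedback loop fails. Under $(\mathbf{L2})$ the kernel is only $L^1$, so the sole available velocity bound is $\Vert \mathbf{a}[\mathbf{u}]\Vert_{L^\infty}\le\Vert\mathbf{K}\Vert_{L^1}\Vert\mathbf{u}\Vert_{L^\infty}$; mass conservation gives nothing for the velocity (that would require $\mathbf{K}\in L^\infty$). Consequently every estimate you can run is \emph{quadratic} in $\Vert\mathbf{u}\Vert_{L^\infty}$: the mild formulation yields
\begin{equation*}
\Vert u(t)\Vert_{L^\infty}\;\le\;\Vert f\Vert_{L^\infty}+C\Vert\mathbf{K}\Vert_{L^1}\int_0^t (t-s)^{-1/2}\,\Vert u(s)\Vert_{L^\infty}^2\,ds,
\end{equation*}
and the constants in your $L^p$ energy estimates scale like $\Vert\mathbf{a}\Vert_{L^\infty}^2\sim\Vert\mathbf{u}\Vert_{L^\infty}^2$. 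A quadratic Volterra/Gr\"onwall inequality of this type only reproduces the local existence time; Moser iteration cannot repair it, because the loss is not in passing from $L^p$ to $L^\infty$ but in the $u$-dependence of the drift itself. Worse, no sign-blind argument of this kind can exist: the lemma places no sign restriction on $\pmb{\mu}$, so one may take $N=1$, $n=2$, $\mu<0$ and $K(x)=\frac{1}{2\pi|x|}$, which satisfies $(\mathbf{L2})$; then \eqref{in_lem1} is (up to a smooth bounded correction from periodization) the two-dimensional parabolic--elliptic Keller--Segel system, whose smooth non-negative solutions with supercritical mass blow up in finite time. Hence global existence under $(\mathbf{L1})$--$(\mathbf{L2})$ with the convolution acting on $\mathbf{u}$ itself is false in general. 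What rescues Theorem 5.1 of \cite{painter2023} is precisely the hypothesis your proof drops: boundedness of $g$ gives $\Vert\mathbf{a}\Vert_{L^\infty}\le\Vert\mathbf{K}\Vert_{L^1}\sup g$ uniformly in $\mathbf{u}$, turning the drift into a uniformly bounded one, after which globality is routine. Any correct self-contained proof must either reinstate that saturation, restrict to small data (the regime the paper actually uses), or impose additional conditions (e.g.\ $\mathbf{K}\in L^\infty$, or repulsive sign assumptions on $\pmb{\mu}$) that your proposal does not invoke.
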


We observe that the assumptions and model stated here are slightly different from those given in \cite{painter2023}, where the authors consider a semilinear function $g(\mathbf{u})$ in the convolution product, which is approximately linear up to a certain bounded density and becomes zero beyond which. This condition finds its roots in the biological context, where it signifies the spatial limitations or saturation of receptors. It aligns with our natural expectations, as physical or biological constraints would naturally give rise to a bound. However, in this work, we only consider the solutions $\mathbf{u}$ near 0, as seen from the method of higher order variation. Therefore, it suffices for us to consider a specific case of the result of \cite{painter2023}, taking $\mathbf{u}$ directly in \eqref{in_lem1} instead of using $g(\mathbf{u})$.

Practically, this model is usually used to describe the cell–cell adhesion. The conditions outlined in Lemma \ref{wellposed_eco1} capture the phenomenon wherein cells within a certain proximity exhibit strong adhesive forces, while cells beyond this range experience minimal adhesion. This model is particularly valuable for simulating scenarios where cell adhesion is confined to specific physical boundaries or contact regions.

Similarly, there is a well-posedness result for the case of \eqref{adoptedmodel2}.
 \begin{lem}[Corollary 5.1 of \cite{painter2023}]\label{wellposed_eco2}
     Consider the model 
     \begin{equation}\label{in_lem2}
     \begin{cases}
         \partial_t \mathbf{u} =\mathbf{d} \Delta \mathbf{u} + \nabla \cdot (\mathbf{u} \nabla \pmb{\nu} (\mathbf{W}\ast \mathbf{u})), &\  \text{in}\  Q,\\ 
        \mathbf{u}(x,0)=\mathbf{f}(x), &\  \text{in}\  \mathbb{T}^{n},\\
     \end{cases}
     \end{equation}
     where $\pmb{\nu} >0$, $\mathbf{d}$ satisfies the same conditions as Lemma \ref{wellposed_eco1}, and the potential $W(\vert r \vert)$ is a function of the distance of the interaction $\vert r\vert =\vert y-x \vert.$ 
     
     Assume for all $i=1,\dots,N$:\par
     $(\mathbf{I1})$ for $p\geq 1,$ let $f_i(x) \in X_p:=C^0(\mathbb{T}^{n})\cap L^{\infty}(\mathbb{T}^{n})\cap L^p(\mathbb{T}^n) \cap C^{2+\alpha}(\mathbb{T}^{n})$ be non-negative;\par
     $(\mathbf{I2})$ $W_i(\vert r\vert)\in L^{\infty}$ and $W_i(\vert r \vert)$ has compact support inside a ball $B_R(0);$ \par
    $(\mathbf{I3})$ There exists a scalar function $w_i(|r|)$ such that $\nabla W_i(|r|)=\frac{r}{|r|}w_i(|r|)$ where $w_i(|r|) \in L^1(\mathbb{T}^{n})$ and $w_i(|r|) \geq 0.$\\
    Then  there exists a unique, classical, global solution $\mathbf{u}=(u_1,\dots,u_N)$ such that, for each $u_i$,
      \[u_i \in C^0([0,\infty);X_p)\cap C^{2,1}(B_r(0)\times (0,\infty)).\]
 \end{lem}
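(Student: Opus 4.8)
The plan is to treat \eqref{in_lem2} as an abstract semilinear parabolic system on $\mathbb{T}^n$ and to combine a contraction-mapping argument for local solvability with a priori estimates that promote the local solution to a global classical one, exactly paralleling the strategy behind Lemma \ref{wellposed_eco1}. Since $\mathbf{d}=dI_N$, the principal part is the scalar heat operator, whose analytic semigroup $e^{td\Delta}$ on the torus is available together with the standard smoothing bound $\|e^{\sigma d\Delta}\nabla\cdot G\|_{L^\infty(\mathbb{T}^n)}\lesssim \sigma^{-1/2}\|G\|_{L^\infty(\mathbb{T}^n)}$. Writing the advection term as $\nabla\cdot\big(u_i\,\mathbf v[\mathbf u]\big)$ with velocity $\mathbf v[\mathbf u]:=\pmb\nu\,\nabla(\mathbf W*\mathbf u)$, each component is recast in mild (Duhamel) form $u_i(t)=e^{td\Delta}f_i+\int_0^t e^{(t-s)d\Delta}\,\nabla\cdot\big(u_i\,\mathbf v[\mathbf u]\big)\,ds$, which defines a fixed-point map $\Phi$ on $C^0([0,\tau];X_p)$.

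The key quantitative step is controlling the nonlocal velocity. By assumption $(\mathbf{I3})$ one has $\nabla W_i=\frac{r}{|r|}w_i$ with $w_i\in L^1(\mathbb{T}^n)$, so $\nabla(W_i*u_j)=(\nabla W_i)*u_j$ and Young's convolution inequality gives $\|\mathbf v[\mathbf u]\|_{L^\infty}\lesssim \sum_{i,j}\|\nabla W_i\|_{L^1}\|u_j\|_{L^\infty}$, with $\mathbf u\mapsto\mathbf v[\mathbf u]$ Lipschitz in $L^\infty$. Pairing this with the $\sigma^{-1/2}$ smoothing absorbs the single derivative produced by $\nabla\cdot$, since $\int_0^\tau(\tau-s)^{-1/2}\,ds=2\tau^{1/2}<\infty$. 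Hence $\Phi$ maps a small ball of $C^0([0,\tau];X_p)$ into itself and is a contraction for $\tau$ small, yielding a unique local mild solution; this is precisely where the boundedness and compact support in $(\mathbf{I2})$ and the $L^1$ control of the kernel in $(\mathbf{I3})$ are essential.

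Non-negativity and mass conservation come next. Rewriting each equation in non-divergence form $\partial_t u_i=d\Delta u_i+\mathbf v[\mathbf u]\cdot\nabla u_i+(\nabla\cdot\mathbf v[\mathbf u])\,u_i$ and treating $\mathbf v[\mathbf u]$ as a known drift — bounded, with bounded divergence, by the regularity just established — the scalar parabolic maximum principle propagates the non-negativity of $f_i$ from $(\mathbf{I1})$ to each $u_i$. Integrating over $\mathbb{T}^n$ and using periodicity, all divergence terms vanish, so $\int_{\mathbb{T}^n}u_i(\cdot,t)=\int_{\mathbb{T}^n}f_i$ and the $L^1$ mass is conserved. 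I would then run an Alikakos--Moser iteration on the $L^p$ norms, playing the dissipation from $d\Delta u_i$ against the aggregation term (whose velocity is controlled via Young's inequality together with the conserved mass and $\|\mathbf u\|_{L^\infty}$), to obtain an $L^\infty$ bound on every finite interval $[0,T]$. This bound rules out finite-time blow-up and lets the local solution be continued to all of $[0,\infty)$.

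Finally, with $\mathbf u\in L^\infty$ on compact time intervals the drift $\mathbf v[\mathbf u]$ lies in $L^\infty$, so interior parabolic $L^p$ estimates followed by Schauder estimates bootstrap the regularity to $u_i\in C^{2,1}(B_r(0)\times(0,\infty))$, the $C^{2+\alpha}$ regularity of $\mathbf f$ in $(\mathbf{I1})$ furnishing the compatibility needed near $t=0$; combined with the $X_p$-continuity from the mild formulation this yields the claimed $u_i\in C^0([0,\infty);X_p)\cap C^{2,1}(B_r(0)\times(0,\infty))$. I expect the main obstacle to be the global-in-time $L^\infty$ estimate: because the advective velocity scales with $\|\mathbf u\|_{L^\infty}$, one must show that the linear diffusion genuinely dominates the multi-species aggregation so that no mass concentrates in finite time, and it is exactly the sign condition $w_i\geq 0$ together with the boundedness and compact support of the potential in $(\mathbf{I2})$--$(\mathbf{I3})$ that make the iteration close.
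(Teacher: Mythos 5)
Your proposal attempts a from-scratch well-posedness proof, which is a genuinely different route from the paper: the paper does not prove this lemma analytically at all, but imports it as Corollary 5.1 of \cite{painter2023} and justifies it in one line, observing that \eqref{in_lem2} becomes the system \eqref{in_lem1} of Lemma \ref{wellposed_eco1} upon substituting $\nabla \mathbf{W}$ for $\frac{r}{|r|}\mathbf{K}(r)$ --- hypothesis $(\mathbf{I3})$ is formulated exactly so that $\nabla W_i=\frac{r}{|r|}w_i$ with $w_i\in L^1$, $w_i\geq 0$ lands in hypothesis $(\mathbf{L2})$. A direct proof would be welcome, but yours has a genuine gap at its decisive step.

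The gap is the global-in-time $L^\infty$ bound, which is the actual content of the lemma. Your own velocity estimate, $\|\mathbf{v}[\mathbf{u}]\|_{L^\infty}\lesssim\|w\|_{L^1}\|\mathbf{u}\|_{L^\infty}$, makes the nonlinearity effectively quadratic in $\|\mathbf{u}\|_{L^\infty}$: Duhamel yields $\|u(t)\|_{L^\infty}\leq\|f\|_{L^\infty}+C\int_0^t(t-s)^{-1/2}\|u(s)\|_{L^\infty}^2\,ds$, a Riccati-type inequality that is consistent with finite-time blow-up, and nothing in your sketch excludes it. Mass conservation does not rescue the argument, because $(\mathbf{I2})$--$(\mathbf{I3})$ give $\nabla W_i\in L^1$ but not $\nabla W_i\in L^\infty$, so Young's inequality cannot trade $\|u\|_{L^\infty}$ for the conserved $\|u\|_{L^1}$. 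The auxiliary steps inherit the same problem: in your non-divergence rewriting, $\nabla\cdot\mathbf{v}[\mathbf{u}]=\pmb{\nu}\,(\nabla\mathbf{W})\ast(\nabla\mathbf{u})$ is controlled by $\|\nabla\mathbf{u}\|_{L^\infty}$, a quantity your $C^0([0,\tau];X_p)$ fixed point does not provide, so "bounded, with bounded divergence" is asserted rather than established, and both the maximum-principle step and the Gr\"onwall/Moser step rest on it. Finally, your hope that the sign condition $w_i\geq0$ is what makes the iteration close is misdirected: that condition fixes the direction of the interaction force, and for Keller--Segel-type aggregation nonlinearities the attractive sign is precisely the one that can defeat linear diffusion; it is the boundedness and integrability of the kernel (together with the saturation structure $g(\mathbf{u})$, bounded and vanishing beyond a threshold, under which the global result of \cite{painter2023} is actually proven --- so that the velocity is bounded independently of $\|\mathbf{u}\|_{L^\infty}$) that prevents concentration. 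The paper's reduction to Lemma \ref{wellposed_eco1} inherits that structure from the cited source; your direct argument, which keeps the genuinely quadratic term $u_i\,\mathbf{v}[\mathbf{u}]$, has no substitute for it, so the continuation from local to global solution is not proved.
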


It is apparent that Lemma \ref{wellposed_eco2} can be easily derived from Lemma \ref{wellposed_eco1} by substituting $\frac{r}{|r|}\mathbf{K}(r)$ with $\nabla \mathbf{W}$. This substitution leads to the emergence of condition $(\mathbf{I3})$, which is notably more restrictive. This condition mandates that the drift is consistently directed towards the origin, with the origin representing the position of the probing individual. Consequently, the forces within a group are consistently attractive, giving rise to the observable phenomena exhibited by social animals. For example, hyenas within a clan tend to exhibit cohesive behavior, with individuals often staying close together and coordinating their actions. When hunting, hyenas work together in a coordinated manner to pursue and capture prey by communicating with vocalizations and visual signals. Under this arrangement, they enhance their hunting success, protect their territory, and strengthen their overall social structure. 

\begin{rmk}\label{rmk_wellposed}
It is challenging to arrive at a general conclusion regarding the well-posedness of the models \eqref{adoptedmodel1} and \eqref{adoptedmodel2}, and it is apparent that the aforementioned two lemmas represent two specific instances for them respectively. However, we are aware that solutions must exist for \eqref{adoptedmodel1} and \eqref{adoptedmodel2} based on their physical significance. In subsection \ref{mainrs}, we will make direct assumptions for these models and present our main theorems. Lemma \ref{wellposed_eco1} and Lemma \ref{wellposed_eco2} are treated as distinct applications of the unique identifiability problem, and their proofs are included at the end of the respective proofs for the two main models.
\end{rmk}

\subsection{Statement of main results} \label{mainrs}
Having discussed the associated forward problems, in this section, we present our main results for the inverse problems, which demonstrates that under generic conditions, we can uniquely recover the diffusion coefficients $\mathbf{d}$ from the measurement map $\mathcal{M}^{+}_{\mathbf{d}}$ for systems \eqref{adoptedmodel1} and \eqref{adoptedmodel2}. Additionally, for the system \eqref{adoptedmodel1}, we can recover the advection coefficient $\pmb{\mu}$ and the normalization constant of the integral kernel $\mathbf{k}$ from the measurement map $\mathcal{M}^{+}_{\pmb{\mu},\mathbf{k}}$. On the other hand, for the system \eqref{adoptedmodel2}, we are able to obtain a stronger result and establish the unique identifiability of the advection coefficient $\pmb{\nu}$ and the integral kernel $\mathbf{w}$ from the measurement map $\mathcal{M}^{+}_{\pmb{\nu},\mathbf{w}}$. Finally, we apply these results to verify the uniqueness of systems \eqref{in_lem1} and \eqref{in_lem2} at the end of this subsection.

We start by restating the  nonlinear system of equations of \eqref{adoptedmodel1} and \eqref{adoptedmodel2} for $l=1,2$ as:
\begin{equation}\label{sec2_3_ad1}
\begin{cases}
    \partial_{t}u^{l}_{i}=d^{l}_{i}\Delta u^{l}_{i} + \nabla \cdot \big( h(u^{l}_i) \sum\limits^{N}_{j=1}\mu^{l}_{ij}(\mathbf{k}^{l}_{ij}\ast u^{l}_j)\big),  &\  \text{in}\  Q,\\ 
    u^{l}_i(x,0)=f^{l}_i(x), &\  \text{in}\  \mathbb{T}^{n},
\end{cases}
\end{equation}
and

\begin{equation}\label{sec2_3_ad2}
\begin{cases}
    \partial_{t} u^{l}_{i}=d^{l}_{i}\Delta u^{l}_{i} + \nabla \cdot \big( h(u^{l}_i) \sum\limits^{N}_{j=1} \nu^{l}_{ij}\nabla (w^{l}_{ij}\ast u^{l}_j) \big),   &\  \text{in}\  Q,\\ 
    u^{l}_i(x,0)=f^{l}_i(x), &\  \text{in}\  \mathbb{T}^{n}.
\end{cases}
\end{equation}

First, we present the result for the diffusion coefficient $\mathbf{d}$.
\begin{thm}\label{mainthm_DR}
     Suppose $n=2$. Assume that the system \eqref{sec2_3_ad1} has a solution $u ^l_i\in C^{1+\frac{\alpha}{2},2+\alpha}(Q)$ for $i=1,\dots,N$ for $l=1,2$ respectively. Let $\mathcal{M}^{+}_{\mathbf{d}^{l}}$ be the associated measurement map for $l = 1, 2$ as defined in \eqref{mmap1}. If for any $\mathbf{f}^l \in C^{2+\alpha}(\mathbb{T}^2),$ one has  
$\mathcal{M}^{+}_{\mathbf{d}^1}(\mathbf{f}^1)=\mathcal{M}^{+}_{\mathbf{d}^2}(\mathbf{f}^2)$,
then it holds that $\mathbf{d}^1(x)=\mathbf{d}^2(x)$ for $x\in\omega\Subset\mathbb{T}^2$.

    Similarly, if the system \eqref{sec2_3_ad2} has a solution $u^l_i \in C^{1+\frac{\alpha}{2},2+\alpha}(Q)$ and $\mathcal{M}^{+}_{\mathbf{d}^{l}}$ is the associated measurement map for $l = 1, 2$, and for any $\mathbf{f}^l \in C^{2+\alpha}(\mathbb{T}^2),$ one has  
$\mathcal{M}^{+}_{\mathbf{d}^1}(\mathbf{f}^1)=\mathcal{M}^{+}_{\mathbf{d}^2}(\mathbf{f}^2)$,
then it holds that $\mathbf{d}^1(x)=\mathbf{d}^2(x)$ for $x\in\omega\Subset\mathbb{T}^2$.
\end{thm}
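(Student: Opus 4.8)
The plan is to strip off the nonlinearity by exploiting the amplitude scaling behind the high-order variation method, reducing the problem to a linear inverse problem for the heat equation, and then to recover $d_i$ pointwise on $\omega$ by a Laplace transform in time followed by an asymptotic analysis in the transform variable. First I would drive the system \eqref{sec2_3_ad1} with small non-negative initial data $\mathbf{f}=\varepsilon\mathbf{g}$, $\mathbf{g}\geq 0$, in the spirit of \eqref{characv}. Since the solution stays non-negative so that $h(u_i)=u_i$, and since the advection term $\nabla\cdot\big(u_i\sum_j\mu_{ij}\mathbf{k}_{ij}\ast u_j\big)$ is quadratic in $\mathbf{u}$, it enters only at order $\varepsilon^2$; dividing by $\varepsilon$ and letting $\varepsilon\to0$ (the first-order variation at $\varepsilon=0$) shows that the leading term $v_i:=\partial_\varepsilon u_i|_{\varepsilon=0}$ solves the decoupled linear equation $\partial_t v_i = d_i(x)\Delta v_i$ with $v_i(\cdot,0)=g_i$. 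Because $\mathcal{M}^{+}_{\mathbf{d}^1}=\mathcal{M}^{+}_{\mathbf{d}^2}$ as input-output maps, the interior traces satisfy $v_i^1=v_i^2$ on $\omega\times(0,T)$ for every $g_i$, so it suffices to prove the scalar statement: the interior data on $\omega\times(0,T)$ of solutions of $\partial_t v = d(x)\Delta v$, as the initial datum ranges over $C^{2+\alpha}(\mathbb{T}^2)$, determine $d$ on $\omega$. The identical reduction applies to \eqref{sec2_3_ad2}, whose advection term is also quadratic at leading order, which is exactly why both systems are treated together.

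For the scalar linear problem I would apply the Laplace transform in time (the ``transformative'' step), setting $\tilde v(x,s)=\int_0^\infty e^{-st}v(x,t)\,dt$, which turns the parabolic equation into the family of elliptic equations
\[ d(x)\,\Delta\tilde v(x,s) - s\,\tilde v(x,s) = -g(x),\qquad x\in\mathbb{T}^2. \]
Fixing $x_0\in\omega$, the ``asymptotic'' step analyzes $\tilde v$ as $s\to\infty$: formally $\tilde v(x,s)\sim g(x)/s + d(x)\Delta g(x)/s^2 + \cdots$, so that choosing $g$ with $\Delta g(x_0)\neq0$ gives the pointwise reconstruction $d(x_0)=\lim_{s\to\infty} s\big(s\,\tilde v(x_0,s)-g(x_0)\big)/\Delta g(x_0)$, which uses only the measured values $\tilde v(x_0,\cdot)$ at the single point $x_0$ together with the known initial datum, and in particular requires no spatial differentiation of the measurements. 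Equivalently, by interior regularity one may read this off directly in physical time from $\partial_t v(x_0,0^+)=d(x_0)\Delta g(x_0)$, the $O(t)$ coefficient of the short-time expansion. Since $v^1=v^2$ on $\omega\times(0,T)$ forces equality of these transform/expansion coefficients for $d^1$ and $d^2$, the reconstruction formula yields $d^1(x_0)=d^2(x_0)$; running this over all $x_0\in\omega$ and all components $i$ gives $\mathbf{d}^1=\mathbf{d}^2$ on $\omega$.

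The hard part is making the transformative asymptotic scheme rigorous and verifying non-degeneracy. Three points demand care: (i) justifying the Laplace transform from only finite-time data $\omega\times(0,T)$ — I would invoke analyticity of the heat semigroup and decay of $v$ to its spatial average on $\mathbb{T}^2$ to define the transform, or, more cleanly, bypass it by extracting the short-time Taylor coefficients of $v$ at $t=0^+$, which are well defined by the assumed $C^{1+\frac\alpha2,2+\alpha}(Q)$ regularity; (ii) securing $\Delta g(x_0)\neq0$ and, more generally, ensuring the denominators in the reconstruction do not vanish, which holds for a suitable/generic choice of admissible initial data and is where the freedom in $\mathbf{f}$ is used; and (iii) the restriction to $n=2$, which I expect does not enter the reduction (dimension-free) but rather the rigorous implementation of the asymptotics, presumably through the explicit two-dimensional fundamental-solution structure and the reformulation as a Fredholm equation of the first kind, which also underlies the claimed extension to operators in divergence form. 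I anticipate that points (i)–(iii) — the rigorous justification of the transform, the asymptotic extraction uniformly on $\omega$, and the non-degeneracy of the reconstruction — constitute the main obstacle, whereas the linearizing reduction of the first paragraph is routine.
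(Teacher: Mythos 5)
Your proposal is correct, and its first half --- linearizing around $\mathbf{u}_0=\mathbf{0}$ with non-negative $\mathbf{f}_1$ so that the quadratic advection term drops out and the first-order variation solves $\partial_t v = d(x)\Delta v$, $v(\cdot,0)=g$ --- coincides exactly with the paper's reduction to \eqref{revaI}, for both systems. From there, however, you take a genuinely different route. The paper also Laplace-transforms in time, but then works at the \emph{opposite} end of the transform variable: it converts \eqref{Laplace_varI} into a Lippmann--Schwinger-type integral equation via the two-dimensional Green's function $\frac{1}{2\pi}K_0(\sqrt{p}\,|x-r|)$ of $-\Delta+p$, inverts $I-B_p$ by a Neumann series (possible only because $\|B_p\|=O(|p||\ln p|)$ as $p\to0$, a genuinely two-dimensional logarithmic phenomenon), expands in the small parameter $(\ln p+2\gamma)^{-1}$, and passes to the limit $p\to0$ to obtain a linear Fredholm integral equation of the first kind \eqref{H2} for $1-1/d$, which is finally solved using delta-function initial data and density of harmonic functions. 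Your scheme instead uses the opposite asymptotic regime --- $s\to\infty$, or equivalently the short-time identity $\partial_t v(x_0,0^+)=d(x_0)\Delta g(x_0)$, which is legitimate under the assumed $C^{1+\frac{\alpha}{2},2+\alpha}(Q)$ regularity --- together with an admissible input $g\geq0$ satisfying $\Delta g(x_0)\neq0$ (e.g.\ $2+\cos(2\pi x_1)$, suitably translated), yielding the pointwise reconstruction $d(x_0)=\partial_t v(x_0,0^+)/\Delta g(x_0)$. This is more elementary and buys several things the paper's argument does not: it is dimension-independent, so the hypothesis $n=2$ is not needed at all (this settles your point (iii): the restriction is intrinsic to the paper's Green's-function asymptotics, not to the statement); it uses only measurements near $t=0^+$, so the finite observation window $(0,T)$ causes no difficulty, whereas the paper's transform $\int_0^\infty u\,e^{-pt}\,dt$ tacitly requires data for all positive times; and the non-degeneracy condition is trivial to arrange within the non-negativity constraint since adding a constant to $g$ does not change $\Delta g$. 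What the paper's heavier machinery buys is the constructive reformulation as a first-kind Fredholm equation --- the advertised ``transformative asymptotic technique'' --- which the authors present as extensible to more general divergence-form operators. Both arguments rest on the same unproven standing assumption that the solution map is differentiable at $\mathbf{0}$ in the sense of \eqref{highvaryIorder}, so your proposal is on equal footing with the paper in that respect.
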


A natural corollary for this result is the following:
\begin{cor}\label{cor_DR}
     Suppose $n=2$. Assume that the system \eqref{sec2_3_ad1} has a solution $u^l_i \in C^{1+\frac{\alpha}{2},2+\alpha}(Q)$ for $i=1,\dots,N$ for $l=1,2$ respectively. Let $\mathcal{M}^{+}_{\mathbf{d}^{l}}$ be the associated measurement map for $l = 1, 2$ as defined in \eqref{mmap1}. If for any $\mathbf{f}^l \in C^{2+\alpha}(\mathbb{T}^2)$ and $\mathbf{d}^l$ analytic in $x$, one has  
$\mathcal{M}^{+}_{\mathbf{d}^1}(\mathbf{f}^1)=\mathcal{M}^{+}_{\mathbf{d}^2}(\mathbf{f}^2)$,
then it holds that $\mathbf{d}^1(x)=\mathbf{d}^2(x)$ for $x\in\mathbb{T}^2$.

    Similarly, if the system \eqref{sec2_3_ad2} has a solution $u^l \in C^{1+\frac{\alpha}{2},2+\alpha}(Q)$ and $\mathcal{M}^{+}_{\mathbf{d}^{l}}$ is the associated measurement map for $l = 1, 2$, and for any $\mathbf{f}^l \in C^{2+\alpha}(\mathbb{T}^2)$ and $\mathbf{d}^l$ analytic in $x$, one has  
$\mathcal{M}^{+}_{\mathbf{d}^1}(\mathbf{f}^1)=\mathcal{M}^{+}_{\mathbf{d}^2}(\mathbf{f}^2)$,
then it holds that $\mathbf{d}^1(x)=\mathbf{d}^2(x)$ for $x\in\mathbb{T}^2$.
\end{cor}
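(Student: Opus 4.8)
The plan is to bootstrap the local identifiability established in Theorem \ref{mainthm_DR} up to a global one by analytic continuation, so that the only new ingredient beyond the theorem itself is the identity principle for real-analytic functions.

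First I would invoke Theorem \ref{mainthm_DR} verbatim. Under precisely the hypotheses carried over into the corollary (the same regularity $u^l_i \in C^{1+\frac{\alpha}{2},2+\alpha}(Q)$ of the solutions and the matching of the measurement maps $\mathcal{M}^{+}_{\mathbf{d}^1}(\mathbf{f}^1)=\mathcal{M}^{+}_{\mathbf{d}^2}(\mathbf{f}^2)$ for all admissible $\mathbf{f}^l \in C^{2+\alpha}(\mathbb{T}^2)$), the theorem already delivers $\mathbf{d}^1(x)=\mathbf{d}^2(x)$ for every $x$ in the accessible subregion $\omega\Subset\mathbb{T}^2$. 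Since $\omega$ is, by definition of $\Subset$, an open set with compact closure in $\mathbb{T}^2$, it has nonempty interior. This step applies identically to both systems \eqref{sec2_3_ad1} and \eqref{sec2_3_ad2}, so the two cases of the corollary are treated in exactly the same way and require no separate argument.

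Next I would set $g_i := d^1_i - d^2_i$ for each $i=1,\dots,N$. By the additional hypothesis that each $\mathbf{d}^l$ is analytic in $x$, the functions $d^1_i$ and $d^2_i$ are real-analytic on $\mathbb{T}^2$; viewing them as $1$-periodic real-analytic functions on $\mathbb{R}^2$, each difference $g_i$ is real-analytic on the connected set $\mathbb{R}^2$ (equivalently, on the connected manifold $\mathbb{T}^2$). The first step shows that $g_i$ vanishes on the nonempty open set $\omega$. I would then invoke the identity theorem for real-analytic functions of several variables: a real-analytic function on a connected open set that vanishes on a nonempty open subset vanishes identically. Hence $g_i \equiv 0$ on $\mathbb{T}^2$ for every $i$, which is exactly $\mathbf{d}^1(x)=\mathbf{d}^2(x)$ for all $x\in\mathbb{T}^2$.

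There is essentially no analytic obstacle in this corollary, as the entire quantitative content is already supplied by Theorem \ref{mainthm_DR}; the corollary is a soft upgrade of the conclusion's range from $\omega$ to all of $\mathbb{T}^2$. The only two points meriting a word of care are structural: one must confirm that $\omega$ genuinely has nonempty interior, since vanishing on a set with empty interior (e.g. a lower-dimensional or measure-zero set) would not trigger the identity theorem, and one must note that $\mathbb{T}^2$ is connected, so that the zero set of $g_i$ cannot fail to propagate across the domain. Both are immediate from $\omega\Subset\mathbb{T}^2$ and the connectedness of the torus, and the corollary follows.
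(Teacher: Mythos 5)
Your proposal is correct and is essentially the paper's own argument: the paper's (one-sentence) proof likewise invokes Theorem \ref{mainthm_DR} for uniqueness on $\omega\Subset\mathbb{T}^2$ and then extends to all of $\mathbb{T}^2$ by the analyticity of $\mathbf{d}^l$, which is exactly your application of the identity theorem for real-analytic functions on the connected torus. You have merely spelled out the details (the difference $g_i=d^1_i-d^2_i$, openness of $\omega$, connectedness of $\mathbb{T}^2$) that the paper leaves implicit.
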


Next, for any $n\in\mathbb{N}$, we give the unique identifiability conclusion for the advection coefficient $\pmb{\mu}$ and the normalization constant $\mathbf{k}$:
\begin{thm}\label{mainthm_Inte1}
    Suppose that the system \eqref{sec2_3_ad1} has a solution $u^l_i \in C^{1+\frac{\alpha}{2},2+\alpha}(Q)$ for $i=1,\dots,N$. Let $\mathcal{M}^{+}_{\pmb{\mu}^{l},\mathbf{k}^{l}}$ be the measurement map associated for $l = 1, 2$ as defined in \eqref{mmap2}. 
    
    For any $\mathbf{f}^l \in C^{2+\alpha}(\mathbb{T}^{n})$, if $\mathcal{M}^{+}_{\pmb{\mu}^1,\mathbf{k}}(\mathbf{f}^1)=\mathcal{M}^{+}_{\pmb{\mu}^2, \mathbf{k}}(\mathbf{f}^2)$, then $\pmb{\mu}^1=\pmb{\mu}^2$, assuming that the normalization constant of $\mathbf{k}$ is known and non-zero and $\mathbf{d}$ is independent of $x$.  
    
    Conversely, suppose $\mathbf{d}$ is independent of $x$. If we have knowledge of the non-zero $\pmb{\mu}$, then for any $\mathbf{f}^l \in C^{2+\alpha}(\mathbb{T}^{n})$,  $\mathcal{M}^{+}_{\pmb{\mu}^1,\mathbf{k}^1}(\mathbf{f}^1)=\mathcal{M}^{+}_{\pmb{\mu}^2,\mathbf{k}^2}(\mathbf{f}^2)$ implies that the normalization constants of $\mathbf{k}^1$ and $\mathbf{k}^2$ are the same, i.e., 
    \[ \sum\limits_{j=1}^n\int_{\mathbb{T}^n}\frac{d}{dx_j}\mathbf{k}^{1}_{pq,j}(x-y)dy=\sum\limits_{j=1}^n\int_{\mathbb{T}^n}\frac{d}{dx_j}\mathbf{k}^{2}_{pq,j}(x-y)dy,\quad \forall p,q=1,\dots,N,\text{ in }Q.\]
\end{thm}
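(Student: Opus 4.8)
The plan is to run the high-order variation scheme \eqref{characv} on both systems with a \emph{common} family of initial data, reduce the measurement identity to the second-order (in $\varepsilon$) term, and then diagonalize the resulting linear heat problem by Fourier analysis on $\mathbb{T}^n$; this is possible precisely because $\mathbf{d}$ is a known constant. Concretely, I would feed into \eqref{sec2_3_ad1}, for $l=1,2$, the same data $\mathbf{f}(x;\varepsilon)=\sum_{m\ge1}\varepsilon^{m}\mathbf{f}_{(m)}(x)$ with $\mathbf{f}_{(1)}\ge0$, so that by the high-order variation principle the solutions stay non-negative for small $\varepsilon>0$ and $h(u_i^{l})=u_i^{l}$. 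Inserting $u_i^{l}=\sum_{m\ge1}\varepsilon^{m}u_{i,(m)}^{l}$ and matching powers of $\varepsilon$, the order-$\varepsilon$ term solves the pure heat equation $\partial_t u_{i,(1)}^{l}=d_i\Delta u_{i,(1)}^{l}$ (the advection term is quadratic, hence absent at first order); since $\mathbf{d}$ is the same constant for both systems and the data agree, $u_{i,(1)}^{1}=u_{i,(1)}^{2}=:u_{i,(1)}$. The order-$\varepsilon^2$ term satisfies $\partial_t u_{i,(2)}^{l}=d_i\Delta u_{i,(2)}^{l}+\nabla\cdot\big(u_{i,(1)}\sum_{j}\mu_{ij}^{l}(\mathbf{k}_{ij}\ast u_{j,(1)})\big)$, and expanding $\mathcal{M}^{+}_{\pmb{\mu}^1,\mathbf{k}}(\mathbf{f})=\mathcal{M}^{+}_{\pmb{\mu}^2,\mathbf{k}}(\mathbf{f})$ in $\varepsilon$ forces $u_{i,(2)}^{1}(\cdot,T)=u_{i,(2)}^{2}(\cdot,T)$. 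Thus the difference $v_i:=u_{i,(2)}^{1}-u_{i,(2)}^{2}$ obeys $\partial_t v_i-d_i\Delta v_i=\nabla\cdot\mathbf{F}_i$ with $\mathbf{F}_i=u_{i,(1)}\sum_{j}(\mu_{ij}^{1}-\mu_{ij}^{2})(\mathbf{k}_{ij}\ast u_{j,(1)})$, and $v_i(\cdot,0)=v_i(\cdot,T)=0$.

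The central step is to eliminate the unknown $v_i$. Because $v_i$ vanishes at both $t=0$ and $t=T$, either testing the equation against solutions of the backward heat equation $\partial_t\phi+d_i\Delta\phi=0$ (discarding boundary terms by periodicity) or taking the torus Fourier transform of the Duhamel representation of $v_i(\cdot,T)$ yields, for every frequency $\zeta$, the identity $\int_0^T e^{-d_i|\zeta|^2(T-s)}\,\zeta\cdot\widehat{\mathbf{F}}_i(\zeta,s)\,ds=0$, which depends only on the known first-order solutions and the contrasts $\mu_{ij}^{1}-\mu_{ij}^{2}$.

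Next I would probe with single modes: switching on only species $p$ and $q$, so that $u_{p,(1)}=e^{-d_p|\eta|^2 t}e^{i\eta\cdot x}$ and $u_{q,(1)}=e^{-d_q|\xi|^2 t}e^{i\xi\cdot x}$, the convolution acts as multiplication by the symbol $\widehat{\mathbf{k}}_{pq}(\xi)$ and $\mathbf{F}_p$ becomes a finite sum of exponentials. Choosing $\zeta=\eta+\xi$ (with $\xi\ne\eta$, to separate the self-interaction mode $2\eta$) and using orthogonality on $\mathbb{T}^n$ isolates the $(p,q)$ term and reduces the identity to $(\mu_{pq}^{1}-\mu_{pq}^{2})\big[(\eta+\xi)\cdot\widehat{\mathbf{k}}_{pq}(\xi)\big]=0$, the time integral being a nonzero constant. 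Since $\mathbf{k}$ is known with non-zero normalization constant, the bracket can be made non-zero for a suitable choice of $\eta,\xi$, giving $\mu_{pq}^{1}=\mu_{pq}^{2}$ for all $p,q$. The converse runs identically with $\pmb{\mu}$ known: the same identity becomes $\mu_{pq}\big[(\eta+\xi)\cdot(\widehat{\mathbf{k}^1_{pq}}(\xi)-\widehat{\mathbf{k}^2_{pq}}(\xi))\big]=0$, and specializing to the low-frequency regime, where this Fourier functional is governed by the normalization constant $\sum_{j}\int_{\mathbb{T}^n}\tfrac{d}{dx_j}\mathbf{k}_{pq,j}(x-y)\,dy$, yields the asserted equality of normalization constants.

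I expect two places to be delicate. The first is the \emph{decoupling}: for fixed $p$ the source carries the whole sum $\sum_{j}(\mu_{pj}^{1}-\mu_{pj}^{2})\mathbf{k}_{pj}\ast u_{j,(1)}$, so one must jointly choose the activated species and the probing frequency $\zeta$ so that orthogonality retains exactly one $(p,q)$ contribution, which forces the self-interaction mode $2\eta$ and the cross mode $\eta+\xi$ to be kept distinct. The second, and harder, point is identifying precisely which functional of $\mathbf{k}_{pq}$ the second-order data detects and tying it to the normalization constant: since $\mathbf{k}_{pq}$ is singular (as in Lemma \ref{wellposed_eco1}), the interchange of convolution, differentiation and integration must be justified carefully, and it is exactly here that the non-zero normalization hypothesis is indispensable, both to make $(\eta+\xi)\cdot\widehat{\mathbf{k}}_{pq}(\xi)$ non-degenerate in the first part and to give meaning to the recovered quantity in the converse. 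A minor technical point is that $\mathbf{f}_{(1)}\ge0$ precludes literal complex exponentials; this is handled by first establishing the multilinear identity for non-negative data and then extracting individual modes using the bilinearity of the order-$\varepsilon^2$ source together with polarization.
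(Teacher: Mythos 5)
Your scaffolding coincides with the paper's: both expand around the zero solution with $\mathbf{f}_1\ge 0$ so that $h(u)=u$, both conclude $u^{1(I)}_i=u^{2(I)}_i$ from the first-order heat equation, both reduce the measurement equality to the statement that the second-order difference $v_i$ solves $\partial_t v_i-d_i\Delta v_i=\nabla\cdot\mathbf{F}_i$ with $v_i(\cdot,0)=v_i(\cdot,T)=0$, and both eliminate $v_i$ by duality (your Duhamel--Fourier identity is what the paper obtains by testing against exponential solutions of the adjoint heat equation). Your mode bookkeeping is also right: with species $p,q$ carrying frequencies $\eta\ne\xi$, the source splits into a $2\eta$ mode and an $\eta+\xi$ mode, probing at $\zeta=\eta+\xi$ isolates $(\mu^1_{pq}-\mu^2_{pq})\,(\eta+\xi)\cdot\widehat{\mathbf{k}}_{pq}(\xi)$ against a positive time integral, and the polarization remark reconciling complex exponentials with $\mathbf{f}_1\ge0$ is standard. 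Where you part ways with the paper is the probe itself: the paper takes the first-order solutions to be the constants $0$ and $1$, so the convolution becomes $\int_{\mathbb{T}^n}\mathbf{k}_{pq}(x-y)\,dy$ and the divergence of the source is \emph{literally} $(\mu^1_{pq}-\mu^2_{pq})$ times the normalization constant $\sum_{j}\int_{\mathbb{T}^n}\frac{d}{dx_j}\mathbf{k}_{pq,j}(x-y)\,dy$; both directions of the theorem then follow in one line, with no need to relate Fourier coefficients at non-zero frequencies to anything.

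The gap in your version is exactly that relation, and it bites in both directions. In the direct part you assert that a non-zero normalization constant makes $(\eta+\xi)\cdot\widehat{\mathbf{k}}_{pq}(\xi)$ non-zero for suitable $\eta,\xi$; in the converse you propose to read the normalization constant off ``in the low-frequency regime''. Neither is substantiated, and the second points the wrong way. Observe that for a kernel whose components are genuinely periodic and differentiable, $\sum_j\int_{\mathbb{T}^n}\partial_{x_j}\mathbf{k}_{pq,j}(x-y)\,dy\equiv 0$; the hypothesis is non-vacuous only because the intended kernels (as in Lemma \ref{wellposed_eco1}, $\mathbf{k}=\tfrac{x}{|x|}\mathbf{K}$) are singular at the origin, so that the classical divergence under the integral sign and the distributional divergence differ by a multiple of $\delta_0$. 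Through $\xi\cdot\widehat{\mathbf{k}}_{pq}(\xi)=-i\,\widehat{\nabla\cdot\mathbf{k}_{pq}}(\xi)$, the normalization constant is therefore the \emph{flat} (Dirac) part of the transform, visible in the behaviour of $\xi\cdot\widehat{\mathbf{k}}_{pq}(\xi)$ as $|\xi|\to\infty$, not as $\xi\to0$; moreover, on $\mathbb{T}^n$ the frequencies form a discrete lattice, so there is no low-frequency limit to take at all. Your identities can be completed into a proof with this corrected link: in the converse, varying $\eta$ at fixed $\xi$ gives $\widehat{\mathbf{k}^1_{pq}}(\xi)=\widehat{\mathbf{k}^2_{pq}}(\xi)$ for all $\xi\ne0$ (so $\mathbf{k}^1_{pq}-\mathbf{k}^2_{pq}$ is constant and the classical-divergence normalization constants agree), and in the direct part the $\delta_0$ decomposition plus Riemann--Lebesgue shows $\xi\cdot\widehat{\mathbf{k}}_{pq}(\xi)\ne0$ for all large $\xi$, which makes the bracket non-degenerate. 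But as written, the decisive step --- identifying which functional of the singular kernel the second-order data determine and why the non-degeneracy hypothesis controls it --- is left as an assertion whose one concrete justification is incorrect, so the proof is incomplete precisely at the point the theorem is about.
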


Similarly, we have the unique identifiability conclusion for the advection coefficient $\pmb{\nu}$ and the integral kernel $\mathbf{w}$:
\begin{thm}\label{mainthm_Inte2}
    Suppose that the system \eqref{sec2_3_ad2} has a solution $u^l_i \in C^{1+\frac{\alpha}{2},2+\alpha}(Q)$ for $i=1,\dots,N$. Let $\mathcal{M}^{+}_{\pmb{\nu}^{l},\mathbf{w}^{l}}$ be the measurement map associated for $l = 1, 2$ as defined in \eqref{mmap2}. 
    
    Then, for any $\mathbf{f}^l \in C^{2+\alpha}(\mathbb{T}^{n}),$ $\mathcal{M}^{+}_{\pmb{\nu}^1,\mathbf{w}^1}(\mathbf{f}^1)=\mathcal{M}^{+}_{\pmb{\nu}^2 \mathbf{w}^2}(\mathbf{f}^2)$ implies that $\pmb{\nu}^1=\pmb{\nu}^2$, assuming that $\mathbf{w}$ is known and non-trivial. 
    
    Conversely, if we have knowledge of $\pmb{\nu}$, then for any $\mathbf{f}^l \in C^{2+\alpha}(\mathbb{T}^{n}),$ $\mathcal{M}^{+}_{\pmb{\nu}^1,\mathbf{w}^1}(\mathbf{f}^1)=\mathcal{M}^{+}_{\pmb{\nu}^2,\mathbf{w}^2}(\mathbf{f}^2)$ implies that $\mathbf{w}^1=\mathbf{w}^2$ in $Q$.
\end{thm}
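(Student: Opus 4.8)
The plan is to combine the high-order variation method of Section~\ref{hovaria} with a duality argument against the heat semigroup, and then to separate the contributions of individual species pairs by a scaling and Fourier-mode analysis. First I would feed both systems the common family of initial data $\mathbf{f}^1=\mathbf{f}^2=\varepsilon\mathbf{g}$ with $\mathbf{g}\ge 0$ and $\varepsilon>0$ small; then the corresponding solutions stay non-negative, so $h(u_i^l)=u_i^l$ and \eqref{sec2_3_ad2} reduces to \eqref{origin2}. Writing $u_i^l=\sum_{m\ge 1}\varepsilon^m u_i^{(m),l}$ and collecting powers of $\varepsilon$, the $O(\varepsilon)$ equation is the pure heat flow $\partial_t u_i^{(1),l}=d_i^l\Delta u_i^{(1),l}$, $u_i^{(1),l}(\cdot,0)=g_i$, since the nonlinearity is quadratic. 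Matching the measurement maps at order $\varepsilon$ forces $e^{Td_i^1\Delta}g_i=e^{Td_i^2\Delta}g_i$ for every $g_i$, hence $\mathbf{d}^1=\mathbf{d}^2=:\mathbf{d}$ (comparing the Fourier multipliers on $\mathbb{T}^n$) and $u_i^{(1),1}=u_i^{(1),2}=:u_i^{(1)}$.

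At order $\varepsilon^2$ the only nonlinear contribution is the product of first-order profiles, so $v_i:=u_i^{(2),1}-u_i^{(2),2}$ solves $\partial_t v_i-d_i\Delta v_i=\nabla\cdot G_i$ with $v_i(\cdot,0)=0$, where $G_i:=u_i^{(1)}\sum_{j=1}^N\big[\nu_{ij}^1\nabla(w_{ij}^1\ast u_j^{(1)})-\nu_{ij}^2\nabla(w_{ij}^2\ast u_j^{(1)})\big]$. Because the terminal data agree, $v_i(\cdot,T)=0$ as well. Using Duhamel's formula $v_i(\cdot,T)=\int_0^T e^{(T-s)d_i\Delta}(\nabla\cdot G_i)(\cdot,s)\,ds$, pairing with a test function $\phi_i$, moving the semigroup onto $\phi_i$ and integrating by parts, I would obtain the clean identity
\[\int_0^T\!\!\int_{\mathbb{T}^n} G_i(x,s)\cdot\nabla\psi_i(x,s)\,dx\,ds=0,\qquad \psi_i(\cdot,s):=e^{(T-s)d_i\Delta}\phi_i,\]
valid for all admissible $\mathbf{g}$ and all $\phi_i$ (taking $\phi_i$ a trigonometric polynomial keeps $\psi_i$ explicit).

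To prove the first assertion I set $\mathbf{w}^1=\mathbf{w}^2=\mathbf{w}$, so the identity reads $\int_0^T\!\int u_i^{(1)}\sum_j(\nu_{ij}^1-\nu_{ij}^2)\nabla(w_{ij}\ast u_j^{(1)})\cdot\nabla\psi_i=0$. Fixing $i,j_0$, taking $g_k=0$ for $k\notin\{i,j_0\}$ and rescaling $g_i\mapsto s\,g_i$, $g_{j_0}\mapsto\tau\,g_{j_0}$ with $s,\tau\ge0$ turns the left side into a polynomial $s^2A+s\tau B$ in $(s,\tau)$, whose vanishing forces $B=0$, i.e. $(\nu_{ij_0}^1-\nu_{ij_0}^2)\int_0^T\!\int u_i^{(1)}\nabla(w_{ij_0}\ast u_{j_0}^{(1)})\cdot\nabla\psi_i=0$ (the diagonal $i=j_0$ is immediate). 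Choosing $g_i,g_{j_0},\phi_i$ to be single Fourier modes $e_m,e_k,e_{p}$ with $p=-(m+k)$ (reached from non-negative data by polarization), the space integral collapses to $m+k+p=0$ and the time integral is a nonzero exponential factor, leaving a nonzero multiple of $(k\cdot p)\,\widehat{w}_{ij_0}(k)$; since $\mathbf{w}$ is non-trivial some $\widehat{w}_{ij_0}(k)\neq0$ with $k\neq0$, and picking $m=0$, $p=-k$ makes $k\cdot p=-|k|^2\ne0$, whence $\nu_{ij_0}^1=\nu_{ij_0}^2$. For the converse I set $\pmb{\nu}^1=\pmb{\nu}^2=\pmb{\nu}$ with $\nu_{ij}\neq0$ known; the same separation isolates $\nu_{ij}\int_0^T\!\int u_i^{(1)}\nabla\big((w_{ij}^1-w_{ij}^2)\ast u_j^{(1)}\big)\cdot\nabla\psi_i=0$, and the Fourier computation now yields $(k\cdot p)\,\widehat{W}(k)=0$ with $W:=w_{ij}^1-w_{ij}^2$; running over $m,k,p$ gives $\widehat{W}(k)=0$ for every $k\neq0$, so $W$ is constant.

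The genuine obstacle is the constant Fourier mode $\widehat{W}(0)$: it is structurally invisible because $w_{ij}$ enters the model only through $\nabla(w_{ij}\ast u_j)$, and convolution with a constant followed by a gradient annihilates it. To upgrade ``$W$ constant'' to $w_{ij}^1=w_{ij}^2$ I would invoke the standing hypotheses on $\mathbf{w}$ (non-increasing in $|x|$ together with the decay/compact-support normalization of Lemma~\ref{wellposed_eco2}), which forces the common constant to be $0$. The remaining points to make rigorous are the validity of the $\varepsilon$-expansion and the legitimacy of term-by-term matching (differentiability of the solution map in $\varepsilon$ together with the non-negativity guaranteed by $\mathbf{g}\ge0$), the density argument allowing arbitrary Fourier modes in place of non-negative inputs, and the non-vanishing of the time-integral factor; these are routine given the regularity set up in Section~\ref{prelims}, so the crux is really the mode-by-mode identification and the treatment of the zero mode.
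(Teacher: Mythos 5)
Your proposal is correct and takes essentially the same route as the paper's proof: second-order variation around the zero solution, testing the difference equation for $u^{(II)}$ against backward-heat test functions, and identifying the coefficients Fourier mode by Fourier mode --- your Duhamel/semigroup duality with $\psi_i=e^{(T-s)d_i\Delta}\phi_i$ and your $(s,\tau)$-scaling separation of species pairs are only cosmetic variants of the paper's integration by parts against the CGO solutions $\omega_1=e^{-|\xi_1|^2t-\frac{i}{\sqrt{d_1}}\xi_1\cdot x}$ and its sequential choice of one- and two-species inputs via Lemma \ref{formu_interp}. The two places where you are more careful than the paper are real refinements rather than deviations: the polarization argument needed to pass from non-negative inputs to complex exponential data (the paper uses $U_i=e^{i\xi\cdot x}$ without comment), and the invisibility of the zero Fourier mode of $w_{ij}$ --- the paper's own computation \eqref{decide_wkk} only forces $\widehat{w^1_{ij}-w^2_{ij}}(\xi)=0$ for admissible $\xi\neq0$, i.e.\ that the difference is constant, so your explicit appeal to the structural decay/support hypotheses to eliminate that constant addresses a step the paper silently skips.
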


\begin{rmk}
    It is important to note that symmetry is not required for $\pmb{\mu}, \pmb{\nu}, \mathbf{k}$ or $\mathbf{w}$. Also, the recovery of $\pmb{\mu}$ and $\mathbf{k}$, and $\pmb{\nu}$ and $\mathbf{w}$ are not simultaneous.
\end{rmk}

Moreover, in the specific cases outlined in Lemma \ref{wellposed_eco1} and Lemma \ref{wellposed_eco2} where well-posedness holds, the above theorems hold, and we have the following results:
\begin{cor} \label{appthm_case1}
        Consider the system
\begin{equation}\label{incon_lem1}
    \begin{cases}
        \partial_t \mathbf{u}^l= \mathbf{d}^l\Delta \mathbf{u}^l -\nabla \cdot \left( \pmb{\mu}^l  \mathbf{u}^l \left(\frac{x}{|x|} \mathbf{K}^l(x)*\mathbf{u}^l(x,t)\right) \right),  &\  \text{in}\  Q,\\ 
        \mathbf{u}^l(x,0)=\mathbf{f}^l(x), &\  \text{in}\  \mathbb{T}^{n},\\
    \end{cases}
\end{equation}
for $l=1,2$ which satisfy the conditions $(\mathbf{L1, L2}).$ 

Let $\mathcal{M}^{+}_{\mathbf{d}^l}$ and $\mathcal{M}^{+}_{\pmb{\mu}^l,\mathbf{K}^l}$ be the measurement maps associated to \eqref{incon_lem1} for $l=1,2$ respectively. The unique identifiability conclusions are as follows:\\
$\mathbf{(I)}$ If 
$\mathcal{M}^{+}_{\mathbf{d}^1}(\mathbf{f}^1)=\mathcal{M}^{+}_{\mathbf{d}^2}(\mathbf{f}^2)$, then $\mathbf{d}^1=\mathbf{d}^2$.\\
$\mathbf{(II)}$ Given that the normalization constant for $\mathbf{K}$ is known and non-zero, if 
$\mathcal{M}^{+}_{\pmb{\mu}^1,\mathbf{K}^1}(\mathbf{f}^1)=\mathcal{M}^{+}_{\pmb{\mu}^2,\mathbf{K}^2}(\mathbf{f}^2)$,
then it holds that $\pmb{\mu}^1=\pmb{\mu}^2$.\\
$\mathbf{(III)}$ Given an advection coefficient $\pmb{\mu}$, if 
$\mathcal{M}^{+}_{\pmb{\mu}^1,\mathbf{K}^1}(\mathbf{f}^1)=\mathcal{M}^{+}_{\pmb{\mu}^2,\mathbf{K}^2}(\mathbf{f}^2)$,
then it holds that the normalization constants for $\mathbf{K}^1$ and $\mathbf{K}^2$ are identical.
\end{cor}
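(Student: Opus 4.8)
The plan is to read Corollary~\ref{appthm_case1} as a direct specialization of Theorem~\ref{mainthm_DR} and Theorem~\ref{mainthm_Inte1} to the concrete kernel $\frac{x}{|x|}\mathbf{K}(x)$, so that the real content is checking that the abstract hypotheses of those theorems are satisfied by the well-posed system~\eqref{incon_lem1}. First I would record the reduction. Under $(\mathbf{L1})$ and $(\mathbf{L2})$, Lemma~\ref{wellposed_eco1} furnishes a unique global solution $\mathbf{u}^l$; since the initial data $\mathbf{f}^l$ are non-negative, the parabolic maximum principle keeps $\mathbf{u}^l\geq 0$, so $h(u_i^l)=u_i^l$ along the solution and \eqref{incon_lem1} is precisely \eqref{sec2_3_ad1} under the identification $\mathbf{k}_{ij}^l=-\frac{x}{|x|}\mathbf{K}^l$ (the sign absorbed into the advection term) together with $\mathbf{d}^l=d^l I_N$ independent of $x$. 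The maps $\mathcal{M}^{+}_{\mathbf{d}^l}$ and $\mathcal{M}^{+}_{\pmb{\mu}^l,\mathbf{K}^l}$ of the corollary are then exactly the measurement maps \eqref{mmap1} and \eqref{mmap2} for this instance.

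The main obstacle is to upgrade regularity. Theorem~\ref{mainthm_DR} and Theorem~\ref{mainthm_Inte1} require $u_i^l\in C^{1+\frac{\alpha}{2},2+\alpha}(Q)$, whereas Lemma~\ref{wellposed_eco1} only delivers $u_i^l\in C^0([0,\infty);X_p)\cap C^{2,1}(B_R(0)\times(0,\infty))$, i.e.\ classical regularity only in a ball about the origin, together with the genuine difficulty that the factor $\frac{x}{|x|}$ is discontinuous at $x=0$. I would handle this by a bootstrap: because $\mathbf{K}\in L^1$ and $\frac{x}{|x|}$ is bounded, the singularity is integrable and the convolution $\frac{x}{|x|}\mathbf{K}\ast u_j^l$ smooths it, inheriting the H\"older regularity of $u_j^l$; the nonlinear drift is thus a H\"older-continuous coefficient, and parabolic Schauder estimates applied to the linear equation each $u_i^l$ satisfies promote the solution to $C^{1+\frac{\alpha}{2},2+\alpha}(Q)$, with the $C^{2+\alpha}(\mathbb{T}^n)$ initial regularity supplied by intersecting $X_p$ with the appropriate H\"older class as in the theorem hypotheses. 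Reconciling the local-near-origin statement of the lemma with the global H\"older bound needed on all of $Q$ (using interior regularity away from $x=0$ and the ball estimate near it) is exactly where the care is required.

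With the hypotheses in place, the three conclusions follow by invocation. For $\mathbf{(I)}$, specializing to $n=2$ as required by Theorem~\ref{mainthm_DR}, the equality $\mathcal{M}^{+}_{\mathbf{d}^1}(\mathbf{f}^1)=\mathcal{M}^{+}_{\mathbf{d}^2}(\mathbf{f}^2)$ yields $\mathbf{d}^1=\mathbf{d}^2$. For $\mathbf{(II)}$ and $\mathbf{(III)}$, since $\mathbf{d}^l$ is independent of $x$, Theorem~\ref{mainthm_Inte1} applies verbatim, giving $\pmb{\mu}^1=\pmb{\mu}^2$ when the normalization constant is known and non-zero, and equality of normalization constants when $\pmb{\mu}$ is known. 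The final step is to translate the abstract normalization constant of the theorem into the present setting: by periodicity and the substitution $z=x-y$,
\[
\sum_{j=1}^n\int_{\mathbb{T}^n}\frac{d}{dx_j}\mathbf{k}_{pq,j}(x-y)\,dy=\int_{\mathbb{T}^n}\nabla\cdot\mathbf{k}_{pq}(z)\,dz,
\]
which for $\mathbf{k}_{pq}=\frac{x}{|x|}\mathbf{K}$ equals $\int_{\mathbb{T}^n}\nabla\cdot\big(\frac{z}{|z|}\mathbf{K}(z)\big)\,dz$; this is precisely the normalization constant of $\mathbf{K}$, so the conclusion of $\mathbf{(III)}$ reads that the normalization constants of $\mathbf{K}^1$ and $\mathbf{K}^2$ coincide.
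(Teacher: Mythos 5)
Your proposal follows the same route as the paper's own proof: invoke Lemma \ref{wellposed_eco1} for a unique, global, non-negative solution, then specialize Theorem \ref{mainthm_DR} and Theorem \ref{mainthm_Inte1} to the system \eqref{incon_lem1}. In fact you supply details that the paper's one-paragraph proof leaves implicit --- the sign identification $\mathbf{k}_{ij}^l=-\frac{x}{|x|}\mathbf{K}^l$, the Schauder bootstrap needed to meet the $C^{1+\frac{\alpha}{2},2+\alpha}(Q)$ regularity hypothesis of the theorems, the restriction to $n=2$ in part $\mathbf{(I)}$, and the change-of-variables translation of the normalization constant --- so your write-up is, if anything, more complete than the original.
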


\begin{cor}\label{appthm_case2}
        Suppose $\mathbf{W}^{l}$ $(l=1,2)$ satisfy $\mathbf{(I_2,I_3)}.$ Consider the systems
        \begin{equation}\label{appthm_eq}
     \begin{cases}
         \partial_t \mathbf{u}^l =\mathbf{d}^l \Delta \mathbf{u}^l + \nabla \cdot (\mathbf{u}^l \nabla \pmb{\nu}^l (\mathbf{W}^l \ast \mathbf{u}^l)), &\  \text{in}\  Q,\\ 
        \mathbf{u}^l(x,0)=\mathbf{f}^l(x), &\  \text{in}\  \mathbb{T}^{n},\\
     \end{cases}
     \end{equation}
    where $\mathbf{f}^l$ satisfies $\mathbf{(I_1)}$. 
     Let $\mathcal{M}^{+}_{\mathbf{d}^l}$ and $\mathcal{M}^{+}_{\pmb{\nu}^l,\mathbf{W}^l}$ be the measurement maps associated to \eqref{appthm_eq} for $l=1,2$ respectively. The unique identifiability conclusions are as follows:\\
$\mathbf{(I)}$ If 
$\mathcal{M}^{+}_{\mathbf{d}^1}(\mathbf{f}^1)=\mathcal{M}^{+}_{\mathbf{d}^2}(\mathbf{f}^2)$, then it follows that $\mathbf{d}^1=\mathbf{d}^2$.\\
$\mathbf{(II)}$ Given the knowledge of the non-trivial function $\mathbf{W}$, if 
$\mathcal{M}^{+}_{\pmb{\nu}^1,\mathbf{W}^1}(\mathbf{f}^1)=\mathcal{M}^{+}_{\pmb{\nu}^2,\mathbf{W}^2}(\mathbf{f}^2)$,
then it holds that $\pmb{\nu}^1=\pmb{\nu}^2$.\\
$\mathbf{(III)}$ Given a non-zero advection coefficient $\pmb{\nu}$, if 
$\mathcal{M}^{+}_{\pmb{\nu}^1,\mathbf{W}^1}(\mathbf{f}^1)=\mathcal{M}^{+}_{\pmb{\nu}^2,\mathbf{W}^2}(\mathbf{f}^2)$,
then it holds that $\mathbf{W}^1=\mathbf{W}^2$.
\end{cor}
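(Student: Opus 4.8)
The plan is to read Corollary \ref{appthm_case2} as a direct application of the two main theorems, Theorem \ref{mainthm_DR} and Theorem \ref{mainthm_Inte2}, to the concrete system \eqref{appthm_eq}, after first checking that this system is a genuine instance of the abstract model \eqref{sec2_3_ad2}. I would begin by observing that, componentwise, $\nabla \cdot (\mathbf{u}^l \nabla \pmb{\nu}^l (\mathbf{W}^l \ast \mathbf{u}^l))$ unpacks, for the $i$-th species, into $\nabla \cdot \big( u_i^l \sum_{j=1}^N \nu_{ij}^l \nabla(W_{ij}^l \ast u_j^l)\big)$, since the constant entries $\nu_{ij}^l$ commute with both $\nabla$ and the summation; this is exactly the nonlinearity of \eqref{sec2_3_ad2} under the identification $w_{ij}^l = W_{ij}^l$ and $h(u_i^l)=u_i^l$. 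The replacement $h(u_i^l)=u_i^l$ is legitimate here precisely because, under $(\mathbf{I_1})$–$(\mathbf{I_3})$, Lemma \ref{wellposed_eco2} furnishes a unique global classical solution that is \emph{non-negative}, which is the branch on which the modified model collapses to its original form \eqref{origin2}.

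Next I would verify that the hypotheses of the main theorems hold. The maps $\mathcal{M}^{+}_{\mathbf{d}^l}$ and $\mathcal{M}^{+}_{\pmb{\nu}^l,\mathbf{W}^l}$ are well defined because Lemma \ref{wellposed_eco2} guarantees existence and uniqueness of the forward solution for every admissible datum $\mathbf{f}^l$ satisfying $(\mathbf{I_1})$. The one point requiring care is the regularity class: the theorems ask for $u_i^l \in C^{1+\frac{\alpha}{2},2+\alpha}(Q)$, whereas Lemma \ref{wellposed_eco2} only asserts $u_i^l \in C^0([0,\infty);X_p) \cap C^{2,1}(B_r(0)\times(0,\infty))$. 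I would close this gap using $(\mathbf{I_1})$, which already places $f_i \in C^{2+\alpha}(\mathbb{T}^n)$, together with parabolic Schauder estimates applied to the equation read as a linear parabolic equation for $u_i^l$ whose drift and source — coming from the now-known convolution terms — are Hölder continuous; a standard bootstrap then upgrades the local $C^{2,1}$ solution to the required global parabolic Hölder regularity on $Q$.

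With these checks in place, the conclusions follow by invoking the theorems. For $\mathbf{(I)}$, in dimension $n=2$ Theorem \ref{mainthm_DR} applied to \eqref{appthm_eq} gives $\mathbf{d}^1=\mathbf{d}^2$ from $\mathcal{M}^{+}_{\mathbf{d}^1}(\mathbf{f}^1)=\mathcal{M}^{+}_{\mathbf{d}^2}(\mathbf{f}^2)$. For $\mathbf{(II)}$ and $\mathbf{(III)}$, Theorem \ref{mainthm_Inte2} applies for any $n$: since in this model the abstract kernel $\mathbf{w}$ is exactly $\mathbf{W}$, equality $\mathcal{M}^{+}_{\pmb{\nu}^1,\mathbf{W}^1}=\mathcal{M}^{+}_{\pmb{\nu}^2,\mathbf{W}^2}$ yields $\pmb{\nu}^1=\pmb{\nu}^2$ when $\mathbf{W}$ is known and non-trivial, and yields $\mathbf{W}^1=\mathbf{W}^2$ when $\pmb{\nu}$ is known. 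Condition $(\mathbf{I_3})$ supplies precisely the gradient/convolution structure on which Theorem \ref{mainthm_Inte2} operates, so no extra structural assumption is needed.

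The main obstacle I anticipate lies in the regularity reconciliation of the second step: Lemma \ref{wellposed_eco2} provides only local-in-space $C^{2,1}$ regularity near the origin and mere continuity in time globally, while the inverse-problem theorems are stated for solutions in the full parabolic Hölder class over all of $Q$. Making the Schauder bootstrap rigorous — in particular treating the non-local convolution drift $\nabla(\mathbf{W}^l \ast \mathbf{u}^l)$ as a coefficient of the prescribed Hölder regularity and confirming that this regularity does not degrade across the whole torus — is the step that demands genuine justification, whereas the remaining identifications are essentially bookkeeping. I would also record explicitly that part $\mathbf{(I)}$ inherits the restriction $n=2$ from Theorem \ref{mainthm_DR}, even though parts $\mathbf{(II)}$–$\mathbf{(III)}$ hold in arbitrary dimension.
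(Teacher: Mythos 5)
Your proposal follows essentially the same route as the paper: its proof of Corollary \ref{appthm_case2} simply cites Lemma \ref{wellposed_eco2} to guarantee a unique, global, non-negative solution (so that $h(\mathbf{u})=\mathbf{u}$ and the measurement maps are well defined) and then invokes Theorem \ref{mainthm_DR} for part $\mathbf{(I)}$ and Theorem \ref{mainthm_Inte2} for parts $\mathbf{(II)}$--$\mathbf{(III)}$. The extra care you take—the Schauder bootstrap reconciling Lemma \ref{wellposed_eco2}'s $C^{2,1}$ regularity with the $C^{1+\frac{\alpha}{2},2+\alpha}(Q)$ class required by the theorems, and the explicit remark that part $\mathbf{(I)}$ inherits the $n=2$ restriction from Theorem \ref{mainthm_DR}—addresses points the paper silently glosses over, and only strengthens the argument.
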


{\centering \section{HIGH-ORDER VARIATION METHOD}  \label{hovaria} }
In this section, we discuss the high-variation method to address the positivity constraint on the biology models. As we will see later from the proofs of the theorems, we only need to execute the linearization process to the second-order in order to uniquely recover the integral kernels $\mathbf{k}$ and $\mathbf{w}$. 
Furthermore, the high-order variation method is a relatively novel approach aimed at guaranteeing non-negativity in physical models and simplifying nonlinear models into linear forms. This work represents the first attempt in employing the multi-variation technique for interacting population systems, to our best knowledge. An intriguing aspect is that the incorporation of the function $h(\mathbf{u})$ provides an alternative means of preserving the non-negativity of the solutions, while also allowing for the direct application of the multi-variation technique in the models.

Consider the systems \eqref{adoptedmodel1} and \eqref{adoptedmodel2}. For any known solution $\mathbf{u}_0$, consider a small-enough positive constant $\varepsilon$. Then, we can expand the function $\mathbf{f}(x,t;\varepsilon)$ as:
\begin{equation}\label{highvarygeneral}
    \mathbf{f}(x;\varepsilon)=\mathbf{u}_{0}+\varepsilon \mathbf{f}_{1}(x)+\frac{1}{2}\varepsilon^2 \mathbf{f}_{2}(x)+ \tilde{\mathbf{f}}(x;\varepsilon),
\end{equation}
where $\mathbf{f}_{1}, \mathbf{f}_{2} \in [C^{2+\alpha}(\mathbb{T}^n)]^N$, and $\tilde{\mathbf{f}}(x;\epsilon)$ satisfies 
\[\frac{1}{|\varepsilon|^3}\norm{\tilde{\mathbf{f}}(x;\epsilon)}_{[C^{2+\alpha}(\mathbb{T}^n)]^N}=\frac{1}{|\varepsilon|^3}\norm{\mathbf{f}(x;\varepsilon)-\mathbf{u}_{0}-\varepsilon \mathbf{f}_{1}(x)-\frac{1}{2}\varepsilon^2 \mathbf{f}_{2}(x)}_{[C^{2+\alpha}(\mathbb{T}^n)]^N}\to0
\]
uniformly in $\varepsilon$. 
When $\mathbf{u}_{0}=\mathbf{0},$ we ask $\mathbf{f}_{1} \geq 0,$ thus $\mathbf{f}$ maintains non-negativity as $\varepsilon \to 0.$ 

Now, we define the first-order variation form for each $i=1,\dots,N$. Let $S$ be the solution operator of \eqref{adoptedmodel1} or \eqref{adoptedmodel2} with respect to $\mathbf{f}$. We assume that there exists a bounded linear operator $A$ from $C^{2+\alpha}(\mathbb{T}^n)$ to $[C^{1+\frac{\alpha}{2},2+\alpha}(Q)]^N$ such that
\begin{equation}\label{highvaryIorder}
	\lim\limits_{\norm{\mathbf{f}}_{C^{2+\alpha}(\mathbb{T}^n)}\to0}\frac{\|S(\mathbf{f})-S(\mathbf{u}_0)- A(\mathbf{f})\|_{[C^{1+\frac{\alpha}{2},2+\alpha}(Q)]^N}}{\norm{\mathbf{f}}_{C^{2+\alpha}(\Omega)}}=0.
\end{equation}  
Then for fixed $\mathbf{f}_1$, $A(\mathbf{f})|_{\varepsilon=0}$ is the solution map for the first-order variation system:
\begin{equation}\label{highvaryIsys}
  \begin{cases}
    \partial_{t}u^{(I)}_{i}-d_{i}\Delta u^{(I)}_{i}=0, & \quad \text{in}\  Q,\\ 
    u^{(I)}_{i}(x,0)= f_{i,1}(x), & \quad  \text{in} \ \mathbb{T}^{n}, 
  \end{cases}
\end{equation}
where $i=1,2,\dots,N$, for both \eqref{adoptedmodel1} and \eqref{adoptedmodel2}. Here, we define 
\begin{equation}
\mathbf{u}^{(I)}(x,t) = (u^{(I)}_1(x,t),\dots,u^{(I)}_N(x,t)):=A(\mathbf{f})|_{\varepsilon=0}.
\end{equation}
For notational simplicity, we write 
\begin{equation}
    u_i^{(I)}(x,t):=\partial_{\varepsilon}u_i(x,t;\varepsilon) \vert_{\varepsilon=0}.
\end{equation}
These first order linearization equations allow us to recover the unknown diffusion coefficients $\mathbf{d}$, by examining the corresponding systems for each species independently.

Next, we define the second-order variation for $\mathbf{u}=(u_1,\dots,u_N)$ as:
\begin{equation}\label{highvaryIIorder}
    u_i^{(II)}:=\partial^{2}_{\varepsilon}u_i \vert_{\varepsilon=0}\quad \text{ for }i=1,\dots,N.
\end{equation}

And the second-order variation system of \eqref{adoptedmodel1} is:
\begin{equation}\label{highvaryII1}
\begin{cases}
    \partial_{t} u^{(II)}_{i}= d_{i}\Delta u^{(II)}_{i}-\nabla \cdot \left[u^{(I)}_{i} \sum\limits^{N}_{j=1}\mu_{ij} (\mathbf{k}_{ij} \ast u^{(I)}_{j}) \right], & \quad \text{in}\  Q,\\ 
    u^{(II)}_{i}(x,0)= 2f_{i,2}(x), & \quad  \text{in} \ \mathbb{T}^{n}, 
  \end{cases}
\end{equation}
for $i=1,2,\dots,N.$ 

Meanwhile, we have the second-order variation system of \eqref{adoptedmodel2} as:
\begin{equation}\label{highvaryII2}
\begin{cases}
    \partial_{t} u^{(II)}_{i}=d_{i}\Delta u^{(II)}_{i}-\nabla \cdot \left[u^{(I)}_{i} \sum\limits^{N}_{j=1}\nu_{ij} \nabla (w_{ij} \ast u^{(I)}_{j})\right], & \quad \text{in}\  Q,\\ 
    u^{(II)}_{i}(x,0)= 2f_{i,2}(x), & \quad  \text{in} \ \mathbb{T}^{n}, 
  \end{cases}
\end{equation}
for $i=1,2,\dots,N.$

Using the second-order variation system for each $u_i(x,t)$, we can address the issue of identifiability for the advection coefficients $\mu_{ij}$, $\nu_{ij}$, the normalization constants $\sum\limits_{j=1}^n\int_{\mathbb{T}^n}\frac{d}{dx_j}\mathbf{k}_{pq,j}(x-y)dy$, and integral kernel ${w_{ij}}$ individually.

In \eqref{highvaryII1} and \eqref{highvaryII2}, $\mathbf{f}_2$ is given arbitrarily no matter what the initial value $\mathbf{u}_0$ is, since the positivity of $\mathbf{u}$ is guaranteed by the positivity of $\mathbf{f}_{1}.$ Moreover, we shall see that the nonlinear terms in the above two systems depend only on the solution of the first order linearization system \eqref{highvaryIsys}. 

\begin{rmk}
    Our method is more versatile than the method of high-order linearization used in \cite{liu2023ipmfg}, where an input of the form \[\mathbf{g}(x;\varepsilon)=\sum_{j=1}^m\varepsilon_j \mathbf{g}_j(x)\] is considered. Indeed, firstly, our method ensures the non-negativity of the solutions by taking $\mathbf{f}_1\geq0$ if $\mathbf{u}_0=\mathbf{0}$. Secondly, our method only gives rise to one system of equations at each order of linearization, which is much simpler than the $\frac{m!}{(m-k)!k!}$ equations at the $k$-th order of linearization for the high-order linearization method.
\end{rmk}

{\centering \section{RECOVERY OF DIFFUSION COEFFICIENTS $\mathbf{d}$ in THEOREM \ref{mainthm_DR}}  \label{Proof_d} }
In this section, we present the proof of Theorem \ref{mainthm_DR}. We will apply a transformative asymptotic technique for the recovery of $\mathbf{d}$. This approach is novel and pioneering, and, to the best of our knowledge, our work is the first research that attempts to recover the diffusion coefficient for parabolic systems with periodic boundary conditions.

We begin by recalling the high order variation method from the previous section, and applying it to our models. We first observe that $\mathbf{u}=\mathbf{0}$ a trivial solution to \eqref{adoptedmodel1} and \eqref{adoptedmodel2}. Therefore, we have the following expansion of $\textbf{u} (x,t;\varepsilon)$ for $l=1,2$:
\begin{equation}\label{general_multiva}
\mathbf{u}^{l}(x,t;\varepsilon)=\varepsilon \mathbf{u}^{l(I)}(x,t)+\frac{1}{2}\varepsilon^2 \mathbf{u}^{l(II)}(x,t)+\tilde{\mathbf{u}}^l(x,t;\varepsilon),
\end{equation}
where we have omitted the higher-order terms for simplicity. Here, $\tilde{\mathbf{u}}^l(x,t;\epsilon)$ satisfies 
\[\frac{1}{|\varepsilon|^3}\norm{\tilde{\mathbf{u}}^l(x,t;\epsilon)}_{[C^{1+\frac{\alpha}{2},2+\alpha}(Q)]^N}=\frac{1}{|\varepsilon|^3}\norm{\mathbf{u}^l(x,t;\varepsilon)-\varepsilon \mathbf{u}^{l(I)}(x,t)-\frac{1}{2}\varepsilon^2 \mathbf{u}^{l(II)}(x,t)}_{[C^{1+\frac{\alpha}{2},2+\alpha}(Q)]^N}\to0
\]
uniformly in $\varepsilon$. By asking $\mathbf{f}_1 \geq 0,$ the systems \eqref{adoptedmodel1} and \eqref{adoptedmodel2} are reduced to
\begin{equation}\label{sec4model1}
\begin{cases}
    \partial_{t}u^{l}_{i}=d^{l}_{i}\Delta u^{l}_{i} + \nabla \cdot \left( u^{l}_i \sum\limits^{N}_{j=1}\mu^{l}_{ij}(\mathbf{k}^{l}_{ij}\ast u^{l}_j)\right),  &\  \text{in}\  Q,\\ 
    u^{l}_i(x,0)=f^{l}_i(x), &\  \text{in}\  \mathbb{T}^{n},\\
\end{cases}
\end{equation}

\begin{equation}\label{sec4model2}
\begin{cases}
    \partial_{t} u^{l}_{i}=d^{l}_{i}\Delta u^{l}_{i} + \nabla \cdot \left( u^{l}_i \sum\limits^{N}_{j=1} \nu^{l}_{ij}\nabla (w^{l}_{ij}\ast u^{l}_j) \right),   &\  \text{in}\  Q,\\ 
    u^{l}_i(x,0)=f^{l}_i(x), &\  \text{in}\  \mathbb{T}^{n},\\
\end{cases}
\end{equation}
with
\[\mathbf{k}^{l}_{ij}\ast u^{l}_j=\int_{\mathbb{T}^{n}} \mathbf{k}^{l}_{ij}(x-y) u^{l}_j(y)dy,\]
\[w^{l}_{ij}\ast u^{l}_j(x)=\int_{\mathbb{T}^{n}} w^{l}_{ij}(x-y) u^{l}_j(y)dy,\]
respectively for $l=1,2,$ and $i=1,2,\dots,N$. Through this setting, we can always ensure the non-negativity for $\mathbf{u}$.

We implement the first-order multi-variation to \eqref{sec4model1} and obtain an initial value problem of the type \eqref{highvaryIsys} for each $i \in \{ 1,\dots,N\}.$ For simplicity, we omit the subscripts $i$ denoting each species, since the proof processes are similar. From the first order linearization, we obtain
\begin{equation}\label{revaI}
\begin{cases} 
\partial_{t}u^{(I)}-d\Delta u^{(I)}=0, &\quad \text{in} \  Q,\\ 
u^{(I)}(x,0)= f_{1}(x), & \quad  \text{in} \ \mathbb{T}^{n}. 
\end{cases} 
\end{equation}

To recover the unknown coefficients $d$, we transform \eqref{revaI} into an equivalent system of unique solvable linear integral equation for it. We will prove the uniqueness of $d$ for $n=2$.

For $p \in \mathbb{C}^{+},$ we denote $\tilde{u}(x,p)= \int^{\infty}_{0}u(x,t)e^{-pt}dt$ as the Laplace transform of $u(x,t)$ for any $p$. Then \eqref{revaI} implies that
\begin{equation}\label{Laplace_varI}
\left(\frac{p}{d(x)}-p\right) \tilde{u}(x,p)+p\tilde{u}(x,p)-\Delta \tilde{u}(x,p)=f_{1}(x) , \quad \text{in} \  Q.
\end{equation}

It is known that the Green's function for the Laplace operator $-\Delta+p$ is given, in $\mathbb{R}^2$, by
\begin{equation}\label{kernelG}
G(x,r,p)=\frac{1}{2\pi} K_0(\sqrt{p}|x-r|),\quad |\arg p|<\pi,
\end{equation}
where $K_0(z)$ is the Macdonald function, which has the representation 
\[K_0(\sqrt{p}|x-r|) = \frac{1}{2}\ln p + \gamma + \ln \frac{|x-r|}{2} + \frac{|x-r|^2}{8}p\ln p + \frac{|x-r|^2}{4}p\ln\frac{|x-r|}{2} + \frac{(\gamma-1)|x-r|^2}{4}p\]
\begin{equation}\label{K0rep}K_0(z) = -\left(\ln\frac{z}{2}+\frac{z^2}{4}\ln\frac{z}{2}+\gamma+\frac{(\gamma-1)z^2}{4}\right)+\mathcal{O}(|z|^3)\end{equation}
as $z\to0$, where $\gamma=0.5772\dots$ is the Euler-Mascheroni constant. Then, we can rewrite \eqref{Laplace_varI} into:
\begin{equation}\label{Inte1}
\tilde{u}(x,p) = \int_{\mathbb{T}^2}G(x,r,p) f_{1}(r)\,dr - p \int_{\mathbb{T}^2} G(x,r,p) \tilde{u}(r,p)\left(\frac{1}{d(r)}-1\right)\,dr.
\end{equation}

Let $S_\rho$ be a circle such that $\overline{\mathbb{T}^2}\subset S_\rho$. Then 
\begin{equation}\label{Inte2}
\tilde{u}(x,p) + p \int_{S_\rho} G(x,r,p) \tilde{u}(r,p)\left(\frac{1}{d(r)}-1\right)\,dr = \int_{S_\rho}G(x,r,p) f_{i,1}(r)\,dr.
\end{equation}

Since $p$ is arbitrary, we can take $p \in S^{+}_{\varepsilon}$ where  $S^{+}_{\varepsilon}=\{p\in \mathbb{C}: \vert p \vert \leq \varepsilon, \text{Re } p >0\}$ for any $\varepsilon<1.$ Then we divide both sides of \eqref{Inte2} by $\ln p$. Denoting $\tilde{v}(x,p)=(\ln p )^{-1} \tilde{u}(x,p),$ we obtain the following equation:
\begin{equation}\label{Inte3}
\tilde{v}(x,p)+ p \int_{S_\rho} G(x,r,p) \tilde{v}(r,p)\left(\frac{1}{d(r)}-1\right)\,dr = \frac{1}{\ln p}\int_{S_\rho}G(x,r,p) f_{1}(r)\,dr.
\end{equation}

We introduce the operator $B_p:L^2(S_{\rho})\to L^2(S_{\rho}),$
\begin{equation}\label{operatorB}
(B_p w)(x)=\int_{S_\rho} pG(x,r,p) \left(1-\frac{1}{d(r)}\right)w(r)\,dr,
\end{equation}
where $x \in S_{\rho}, p \in S^{+}_{\varepsilon}.$ By the continuity of $d$, we know that $B_p$ is completely continuous for all $ p \in S^{+}_{\varepsilon}$ and $\Vert B_p\Vert= O(|p||\ln p|)$ for all small enough $\varepsilon.$ Moreover, $\Vert B_p\Vert<1.$ Viewing the left hand side of \eqref{Inte2} as $(I-B_p)\tilde{u}$, we can obtain
\begin{equation}\label{expansion_v}
\begin{split}
   \tilde{v}(x,p)& =(I-B_p)^{-1}g(x,p)=(I+B_p+B_p^2+\cdots) g(x,p),
\end{split}
\end{equation}
where 
\begin{align*}g(x,p)&= -\frac{1}{2\pi}\int_{S_\rho}\bigg[\frac{1}{2} + \frac{\gamma}{\ln p} + \frac{1}{\ln p}\ln \frac{|x-r|}{2} + \frac{|x-r|^2}{8}p + \frac{|x-r|^2}{4}\frac{p}{\ln p}\ln\frac{|x-r|}{2} \\&\qquad\qquad\qquad+ \frac{(\gamma-1)|x-r|^2}{4}\frac{p}{\ln p}\bigg] f_{1}(r)\,dr+\mathcal{O}(|p|^{3/2}\ln^{-1}p)\\
&=:g_0(x,p)+\mathcal{O}(|p|^{3/2}\ln^{-1}p)\end{align*} using the representation \eqref{K0rep}.

Therefore, we can represent 
\begin{align*}
\tilde{v}(x,p)&=g_0(x,p) + \frac{p\ln p}{4\pi^2}\left(\frac{1}{2} + \frac{\gamma}{\ln p}\right)^2\int_{\mathbb{T}^2} \left(1-\frac{1}{d(r)}\right)\,dr\int_{\mathbb{T}^2} f_{1}(s)\,ds 
\\&\quad +\frac{p}{4\pi^2}\left(\frac{1}{2} + \frac{\gamma}{\ln p}\right)\int_{\mathbb{T}^2}\ln \frac{|x-r|}{2}\left(1-\frac{1}{d(r)}\right)\,dr\int_{\mathbb{T}^2} f_{1}(s)\,ds 
\\&\quad +\frac{p}{4\pi^2}\left(\frac{1}{2}\ln p + \gamma\right)\frac{1}{\ln p}\int_{\mathbb{T}^2}\int_{\mathbb{T}^2}\left(1-\frac{1}{d(r)}\right)\ln \frac{|r-s|}{2} f_{1}(s)\,dr\,ds
\\&\quad +\frac{p}{4\pi^2\ln p}\int_{\mathbb{T}^2}\int_{\mathbb{T}^2}\ln \frac{|x-r|}{2}\left(1-\frac{1}{d(r)}\right)\ln \frac{|r-s|}{2} f_{1}(s)\,dr\,ds
+\mathcal{O}(|p|^{3/2}\ln^{-1}p)\end{align*} 

Assuming that $\epsilon<\exp(-2\gamma)$, we can divide both sides by $\frac{p\ln p}{4\pi^2}\left(\frac{1}{2} + \frac{\gamma}{\ln p}\right)^2$, and conduct a power series expansion in terms of the small parameter $(\ln p + 2\gamma)^{-1}$ to obtain 
\begin{equation}\label{h_expansion}\begin{split}
h(x,p)&=\frac{\tilde{v}(x,p)-g_0(x,p)}{\frac{p\ln p}{4\pi^2}\left(\frac{1}{2} + \frac{\gamma}{\ln p}\right)^2}
\\&=\int_{\mathbb{T}^2} \left(1-\frac{1}{d(r)}\right)\,dr\int_{\mathbb{T}^2} f_{1}(s)\,ds
\\&\quad+\frac{2}{\ln p +2\gamma}\int_{\mathbb{T}^2}\ln \frac{|x-r|}{2}\left(1-\frac{1}{d(r)}\right)\,dr\int_{\mathbb{T}^2} f_{1}(s)\,ds 
\\&\quad +\frac{2}{\ln p +2\gamma}\int_{\mathbb{T}^2}\int_{\mathbb{T}^2}\left(1-\frac{1}{d(r)}\right)\ln \frac{|r-s|}{2} f_{1}(s)\,dr\,ds
\\&\quad +\frac{4}{(\ln p + 2\gamma)^2}\int_{\mathbb{T}^2}\int_{\mathbb{T}^2}\ln \frac{|x-r|}{2}\left(1-\frac{1}{D(r)}\right)\ln \frac{|r-s|}{2} f_{1}(s)\,dr\,ds
+\mathcal{O}(|p|^{1/2}).\end{split}
\end{equation}

By letting $p\to 0,$ we obtain a linear Fredholm integral equation of the first kind, for determining $d(x)$:
\begin{equation}\label{H2}\begin{split}H_2(x)&=\lim_{p\to0}(\ln p + 2\gamma)^2\left[h(x,p)-H_0(x,p)-\frac{H_1(x,p)}{\ln p+2\gamma}\right]\\&=4\int_{\mathbb{T}^2}\int_{\mathbb{T}^2}\ln \frac{|x-r|}{2}\left(1-\frac{1}{d(r)}\right)\ln \frac{|r-s|}{2} f_{1}(s)\,dr\,ds,
\end{split}\end{equation}
where 
\begin{equation}\label{H0}
    H_0(x,p)=\lim_{p\to0} h(x,p)=\int_{\mathbb{T}^2} \left(1-\frac{1}{d(r)}\right)\,dr\int_{\mathbb{T}^2} f_{1}(s)\,ds
\end{equation}
and
\begin{equation}\label{H1}\begin{split}
    H_1(x,p)&=\lim_{p\to0}(\ln p + 2\gamma)\left[h(x,p)-H_0(x,p)\right]\\
    &=2\int_{\mathbb{T}^2}\int_{\mathbb{T}^2}\left(1-\frac{1}{d(r)}\right)\left[\ln \frac{|x-r|}{2} f_{1}(s)\,ds+\ln \frac{|r-s|}{2} f_{1}(s)\,ds\right] \,dr.
    \end{split}
\end{equation}
Since these limits exist and can be computed from the data without knowing $d(x)$ for given inputs $f_{1}(x)$, we can determine $d(x)$ from \eqref{H2}--\eqref{H1}, from the measurement of $u\in\omega\Subset\mathbb{T}^2$. In the particular case when we can choose $f_{1}(x)$ to be the delta function $\delta(x-q)$ for some $q\in\mathbb{T}^2$, \eqref{H2} becomes 
\begin{equation}H_2(x)=4\int_{\mathbb{T}^2}\ln \frac{|x-r|}{2}\left(1-\frac{1}{d(r)}\right)\ln \frac{|r-q|}{2} \,dr.
\end{equation}
Then, since the terms $\ln\frac{|x-r|}{2}$, $\ln\frac{|x-r|}{2}$ are harmonic functions, by the density of the harmonic functions, we uniquely determine $1-\frac{1}{d(x)}$, and therefore uniquely determine $d(x)$ for $h(x,p)$ known by measurement for $x\in\omega\Subset\mathbb{T}^2$. This applies to every species $i$ for $i=1,\dots,N$.

The remainder of the proof for $\mathbf{d}$ follows by noting that the first-order variation system of \eqref{sec4model2} is identical to \eqref{revaI}. Therefore, we can repeat the above proof for the recovery of $\mathbf{d}$ in \eqref{sec4model2}.
\qed

\begin{rmk}
    It should be noted that one is unable to uniquely determine $d(x)$ from \eqref{H2}--\eqref{H1} for general $f_{1}(x)$. For more details, refer to Theorem 2 of \cite{K}.
\end{rmk}

\begin{rmk}
    Note that it is possible to add and subtract terms of the form $p^k\tilde{u}(x,p)$ in \eqref{Laplace_varI} for any $k\geq1$. In particular, one can take $k=2$ to obtain an expansion similar to that of \cite{goncharsky2024}.
\end{rmk} 

\begin{rmk}
    It is important to note that our proof allows for more general parabolic-type operators even with higher order terms, as in \cite{goncharsky2024} for the case of the wave equation. Yet, an intriguing aspect to note is this transformative asymptotic technique only works in the case of $n=2$, since it relies on the asymptotic expansion of the Green's function for $-\Delta+p$ in $\mathbb{R}^2$, so that the operator norm of $B_p$ is small for small $p$. This does not hold in dimensions $n\geq3$, and the inverse problem of recovery of the diffusion coefficient remains open, even in the simple case of the heat equation.
\end{rmk}

Finally, we prove the simple extension in Corollary \ref{cor_DR} where we are able to recover $\mathbf{d}$ in the entire domain $\mathbb{T}^2$. 

\begin{proof}[Proof of Corollary \ref{cor_DR}]
    The corollary follows easily by the analyticity of $\mathbf{d}^l$, by extending the result from the uniqueness of $\mathbf{d}$ in open compact subsets $\omega\Subset\mathbb{T}^2$.
\end{proof}

{\centering \section{RECOVERY OF THE ADVECTION COEFFICIENT $\pmb{\mu}$ AND THE NORMALIZATION CONSTANT OF $\mathbf{k}$ IN THEOREM \ref{mainthm_Inte1} AND COROLLARY \ref{appthm_case1}}  \label{Proof_muk} }
In this section, we recover the advection coefficient $\pmb{\mu}$ and the normalization constant of $\mathbf{k}$ in \eqref{adoptedmodel1}, using the higher-order variations. We will consider the general case of $n$-dimensions. 

\subsection{Unique recovery for the Theorem \ref{mainthm_Inte1}}\label{integral_kRE}
First, we consider the first-order variation system for $l=1,2$ and $i=1,\dots,N$.
\begin{equation}\label{1storder}
\begin{cases} 
\partial_{t}u^{l(I)}_i-d_i\Delta u^{l(I)}_i=0, &\quad \text{in} \  Q,\\ 
u^{l(I)}_i(x,0)= f^{l}_{i,1}(x), & \quad  \text{in} \ \mathbb{T}^{n}. 
\end{cases} 
\end{equation}

Let $\bar{u}^{(I)}_{i}=u^{1(I)}_i-u^{2(I)}_i.$ When $\mathcal{M}^{+}_{\pmb{\mu}^1,\mathbf{k}^1}(\mathbf{f}^1)=\mathcal{M}^{+}_{\pmb{\mu}^2, \mathbf{k}^2}(\mathbf{f}^2)$, we obtain
\begin{equation}\label{unqiue_variI}
\begin{cases} 
\partial_{t}\bar{u}^{(I)}_{i}-d_i\Delta\bar{u}^{(I)}_{i}=0, &\quad \text{in} \  Q,\\ 
\bar{u}^{(I)}_{i}(x,0)= 0, & \quad  \text{in} \ \mathbb{T}^{n}. 
\end{cases} 
\end{equation}
By the uniqueness of solution of the heat equation, \eqref{unqiue_variI} only has the trivial solution. Thus we have 
\begin{equation}\label{1st_multivari}
\textbf{u}^{1(I)}(x,t)=\textbf{u}^{2(I)}(x,t)=:\textbf{u}^{(I)}(x,t).\end{equation}

Next, we give the second-order variation system for \eqref{sec4model1}:
\begin{equation}\label{var2k_general}
    \begin{cases}
    \partial_{t} u^{l(II)}_{i}= d_{i}\Delta u^{l(II)}_{i}-\nabla \cdot \left[u^{(I)}_{i} \sum\limits^N_{j=1}\mu^{l}_{ij} (\mathbf{k}^{l}_{ij} * u^{(I)}_{j}) \right], & \quad \text{in}\  Q,\\ 
    u^{l(II)}_{i}(x,0)= 2f^{l}_{i,2}(x), & \quad  \text{in} \ \mathbb{T}^{n}. 
  \end{cases}
\end{equation}

\textbf{\textit{Recovery of the advection coefficient $\pmb{\mu}$} } 
Assume that the normalization constant of the integral kernel $\mathbf{k}$ is given and non-zero. To begin, we first recover the advection coefficients ${\mu_{11}, \mu_{22},\dots, \mu_{NN}}$. Using different inputs of $\mathbf{f}_1$ for each coefficient $\mu_{ii}$, we select a suitable $\mathbf{u}^{(I)}$ such that $\mathbf{u}^{(I)}(x,t)=(0,\dots,1,\dots,0)$ is non-trivial only for the $i$-th species. Here, we take the scenario where $i=1$ as an example, and the remaining $\mu_{ii}$ $(i=2,\dots, N)$ can be identified using a similar approach.

Let $\hat{u}_1^{(II)}=u_1^{1(II)}-u_1^{2(II)},$ $\hat{\mu}_{11}=\mu_{11}^{1}-\mu_{11}^{2}$. Taking $\mathbf{u}^{(I)}(x,t)=(1,0,\dots,0)$, we obtain from \eqref{var2k_general} that $\hat{u}_{1}^{(II)}$ satisfies
  \begin{equation}\label{varmu_hat}
    \begin{cases}
    \partial_{t} \hat{u}^{(II)}_{1}= d_{1}\Delta \hat{u}^{(II)}_{1}-\nabla \cdot \left[ \hat{\mu}_{11} \int_{\mathbb{T}^n}\mathbf{k}_{11}(x-y)dy \right], & \quad \text{in}\  Q,\\ 
    \hat{u}^{(II)}_{1}(x,0)= 0, & \quad  \text{in} \ \mathbb{T}^{n}, 
  \end{cases}
  \end{equation}
  if $\mathcal{M}^{+}_{\pmb{\mu}^1,\mathbf{k}^1}(\mathbf{f}^1)=\mathcal{M}^{+}_{\pmb{\mu}^2, \mathbf{k}^2}(\mathbf{f}^2)$. 
One may see the term $\sum\limits^N_{j=1}\mu^{l}_{1j} (\mathbf{k}_{1j} \ast u^{(I)}_{j})$ only leaves $\mu^{l}_{11}(\mathbf{k}_{11}\ast u^{(I)}_{1})$ as $u^{(I)}_{j}=0$ for $j=2,\dots,N$. Substituting $u^{(I)}_{1}=1$ to the convolution gives the equation in \eqref{varmu_hat}.

Let $\omega_{1}$ be the solution of $-\partial_{t} \omega_{1}-d_1 \Delta \omega_{1}=0$ in $Q$, multiply it on both sides of \eqref{varmu_hat} and apply integration by parts, yielding:
\begin{equation}\label{omegai_formu}
   \int_{Q} -\nabla \cdot \left[ \hat{\mu}_{11} \int_{\mathbb{T}^n} \mathbf{k}_{11}(x-y)dy \right] \omega_1 dxdt=0.
\end{equation}

Substituting the CGO solution of $\omega_1=e^{-|\xi_1|^2 t- \frac{i}{\sqrt{d_1}}\xi_1 \cdot x}$ into \eqref{omegai_formu}, we have
\begin{equation}\label{omegaiseparable_mu}
    \int_{\mathbb{T}^n} -\nabla \cdot \left[  \hat{\mu}_{11}  \int_{\mathbb{T}^n} \mathbf{k}_{11}(x-y)dy \right] e^{- \frac{i}{\sqrt{d_1}}\xi_1 \cdot x} dx=0.
\end{equation}

By viewing the left hand side of \eqref{omegaiseparable_mu} as the Fourier transform in $x$, to ensure \eqref{omegaiseparable_mu}, $\nabla \cdot [  \hat{\mu}_{11}  \int_{\mathbb{T}^n} \mathbf{k}_{11}(x-y)dy ]$ must be zero, which implies that 
\begin{equation}\label{unique_mu}
    \sum\limits_{j=1}^n\hat{\mu}_{11} \int_{\mathbb{T}^n}\frac{d}{dx_j}k_{11,j}(x-y)dy=0.
\end{equation}
Since the normalization constant of $\mathbf{k}_{11}$ is assumed to be non-zero, we have $\hat{\mu}_{11}=0$, that is
\begin{equation}\label{mu11eq}\mu^{1}_{11}=\mu^{2}_{11}.\end{equation}

Next, we recover the remaining advection coefficients $\mu_{ij}$ for $i\neq j$. We will show the case when $i=1$ and $j=2$, and the other cases follow similarly. This time, we choose $\mathbf{f}_1$ such that  $\mathbf{u}^{(I)}(x,t)=(1,1,0,\dots,0)$, which is non-trivial only at the first and second positions. With this configuration, the term $\sum\limits^{N}_{j=1}\mu^{l}_{1j} (\mathbf{k}_{1j} \ast u^{(I)}_{j})$ now simplifies to $\mu_{11}(\mathbf{k}_{11}\ast u^{(I)}_{1})+\mu^{l}_{12}(\mathbf{k}_{12}\ast u^{(I)}_{2})$. By substituting $u^{(I)}_{1}=1$ and $u^{(I)}_{2}=1$ into the convolution, we obtain:
\begin{equation}\label{varmu2_hat}
    \begin{cases}
    \partial_{t} \hat{u}^{(II)}_{1}= d_{1}\Delta \hat{u}^{(II)}_{1}-\nabla \cdot [ \hat{\mu}_{11} \int_{\mathbb{T}^n}\mathbf{k}_{11}(x-y)dy +\hat{\mu}_{12} \int_{\mathbb{T}^n}\mathbf{k}_{12}(x-y)dy], & \quad \text{in}\  Q,\\ 
    \hat{u}^{(II)}_{1}(x,0)= 0, & \quad  \text{in} \ \mathbb{T}^{n}, 
  \end{cases}
  \end{equation}
  where $\hat{u}_1^{(II)}=u_1^{1(II)}-u_1^{2(II)}$ once again, while $\hat{\mu}_{ij}=\mu_{ij}^{1}-\mu_{ij}^{2}$ for $i=1$, $j=1,2$, if $\mathcal{M}^{+}_{\pmb{\mu}^1,\mathbf{k}^1}(\mathbf{f}^1)=\mathcal{M}^{+}_{\pmb{\mu}^2, \mathbf{k}^2}(\mathbf{f}^2)$.

Recall that we have already obtained the conclusion $\hat{\mu}_{11}=0$ from the previous step in \eqref{mu11eq}. To proceed, we multiply the same $\omega_1$ from the previous step on both sides of \eqref{varmu2_hat} and apply integration by parts, and similarly get 
\begin{equation}\label{omega2_mu}
   \int_{Q} -\nabla \cdot \left[ \hat{\mu}_{12} \int_{\mathbb{T}^n} \mathbf{k}_{12}(x-y)dy \right] \omega_1 dxdt=0.
\end{equation}

By substituting the CGO solution of $\omega_1$ to \eqref{omega2_mu} and separating variables, we derive the following equation:
\begin{equation}\label{omegasp2_mu}
    \int_{\mathbb{T}^n} -\nabla \cdot \left[  \hat{\mu}_{12}  \int_{\mathbb{T}^n} \mathbf{k}_{12}(x-y)dy \right] e^{- \frac{i}{\sqrt{d_1}}\xi_1 \cdot x} dx=0.
\end{equation}

Once again, identifying the left hand side \eqref{omegasp2_mu} with the Fourier transform in $x$, \eqref{omegasp2_mu} is equivalent to $\nabla \cdot [  \hat{\mu}_{12}  \int_{\mathbb{T}^n} \mathbf{k}_{12}(x-y)dy ] =0$. Since the normalization constant of $\mathbf{k}_{12}$ is non-zero, we can conclude that  $\hat{\mu}_{12}=0$, which is
\[\mu^{1}_{12}=\mu^{2}_{12}.\]

By considering $\mathbf{u}^{(I)}(x,t)=(1,0,\dots, 1,\dots,0)$, which is non-trivial only at the first and $j$-th $(j=3,4,\dots,N)$ positions, we can repeat the aforementioned procedure and identify $\mu_{1j}$ $(j=3,4,\dots,N)$. Similarly, by choosing the initial value of $\mathbf{u}^{(I)}$ such that $\mathbf{u}^{(I)}(x,t)=(0,\dots,1,\dots,0)$, which is non-trivial only at the $i$-th position, we can recover $\mu_{ii}$ by repeating the first part of this proof, with a different CGO solution of $(-\partial_t-d_i\Delta)\omega_i=0$. Additionally, by modifying $\mathbf{u}^{(I)}(x,t)=(0,\dots,1,\dots,1,\dots,0)$ to be non-trivial at the $i$-th $(i=2,3,\dots,N)$ and the $j$-th $(j\neq i)$ positions, we can recover $\mu_{ij}$ (including $\mu_{21}$ when $i=2$ and $j=1$ which is not necessarily equal to $\mu_{12}$) making use of the CGO solution of $\omega_i$. By repeating this process, all the advection coefficients $\mu_{ij}$ $(i,j=1,\dots,N)$ can be uniquely identified. This leads us to the conclusion
\begin{equation}\label{conclusion_mu}
    \pmb{\mu}^{1}= \pmb{\mu}^{2}
\end{equation}
when $\mathcal{M}^{+}_{\pmb{\mu}^1,\mathbf{k}^1}(\mathbf{f}^1)=\mathcal{M}^{+}_{\pmb{\mu}^2, \mathbf{k}^2}(\mathbf{f}^2)$.
\qed

\textbf{\textit{Recovery of the normalization constants $\mathbf{k}$} }
In this part, we consider the case where all the advection coefficients $\pmb{\mu}$ are known and non-zero. We begin by focusing on the recovery of $\mathbf{k}^{l}_{11}$ $(l=1,2)$. By carefully selecting the input initial value function, we can choose $\mathbf{u}^{(I)}$ as $(1,0,\dots,0)$, which is non-trivial only at the first position. Consequently, \eqref{var2k_general} can be expressed as:
\begin{equation}\label{var2k_k11}
    \begin{cases}
    \partial_{t} u^{l(II)}_{1}= d_{1}\Delta u^{l(II)}_{1}-\nabla \cdot [ \mu_{11} \int_{\mathbb{T}^n} \mathbf{k}^{l}_{11}(x-y)dy ], & \quad \text{in}\  Q,\\ 
    u^{l(II)}_{1}(x,0)= 2f^{l}_{1,2}(x), & \quad  \text{in} \ \mathbb{T}^{n}. 
  \end{cases}
\end{equation}

Let $\bar{\mathbf{u}}^{(II)}=\mathbf{u}^{1(II)}-\mathbf{u}^{2(II)},$ $\bar{\mathbf{k}}_{11}=\mathbf{k}^{1}_{11}-\mathbf{k}^{2}_{11}.$ Meanwhile, from the given condition $\mathcal{M}^{+}_{\pmb{\mu}^1,\mathbf{k}^1}=\mathcal{M}^{+}_{\pmb{\mu}^2,\mathbf{k}^2}$, we now obtain
\begin{equation}\label{var2k_bark11}
    \begin{cases}
    \partial_{t} \bar{u}^{(II)}_{1}= d_{1}\Delta \bar{u}^{(II)}_{1}-\nabla \cdot [ \mu_{11} \int_{\mathbb{T}^n} \bar{\mathbf{k}}_{11}(x-y)dy ], & \quad \text{in}\  Q,\\ 
    \bar{u}^{(II)}_{1}(x,0)= 0, & \quad  \text{in} \ \mathbb{T}^{n}. 
  \end{cases}
\end{equation}

Once again, suppose $\omega_{1}$ is the solution of $-\partial_{t} \omega_{1}-d_1 \Delta \omega_{1}=0$ in $Q$. Multiply both sides of \eqref{var2k_bark11} by $\omega_{1}$ and perform integration by parts, resulting in:
\begin{equation}\label{omega1_fork11}
    \int_{Q} -\nabla \cdot \left[ \mu_{11} \int_{\mathbb{T}^n} \bar{\mathbf{k}}_{11}(x-y)dy \right] \omega_1 dxdt=0.
\end{equation}

We once again consider the CGO solution $e^{-|\xi_1|^2 t- \frac{i}{\sqrt{d_1}}\xi_1 \cdot x}$ for $\omega_1$. By substituting this solution into \eqref{omega1_fork11}, we can transform it into the following expression:
\begin{equation}\label{omega1separable_k11}
    \int_{\mathbb{T}^n} -\nabla \cdot \left[ \mu_{11} \int_{\mathbb{T}^n} \bar{\mathbf{k}}_{11}(x-y)dy \right] e^{- \frac{i}{\sqrt{d_1}}\xi_1 \cdot x} dx=0.
\end{equation}

Denote $\textbf{(A)}:= \nabla \cdot [ \mu_{11} \int_{\mathbb{T}^n} \bar{\mathbf{k}}_{11}(x-y)dy ]$. To ensure \eqref{omega1separable_k11}, $\textbf{(A)}$ should equal to $0$, which indicates
\begin{equation}\label{unique_k11}
    \sum\limits_{j=1}^n\mu_{11} \int_{\mathbb{T}^n}\frac{d}{dx_j}\bar{\mathbf{k}}_{11,j}(x-y)dy=0.
\end{equation}

The conclusion becomes evident. As the advection coefficient $\mu_{11}$ is known and non-zero, the normalization constant of $\bar{\mathbf{k}}_{11}$ must be zero. Consequently, we arrive at the following result:
\begin{equation}\label{equal_k11}
    \sum\limits_{j=1}^n\int_{\mathbb{T}^n}\frac{d}{dx_j}\mathbf{k}^{1}_{11,j}(x-y)dy=\sum\limits_{j=1}^n\int_{\mathbb{T}^n}\frac{d}{dx_j}\mathbf{k}^{2}_{11,j}(x-y)dy.
\end{equation}

By employing a similar approach, we can also establish the uniqueness of the normalization constants for $\mathbf{k}_{22},\mathbf{k}_{33},\dots,\mathbf{k}_{NN}$ in this way.

Next, by choosing $\mathbf{u}^{(I)}(x,t)=(1,1,0,\dots,0)$, which is non-trivial only on the first and second places, we have the following equation for $\bar{u}^{(II)}_{1}$ and $\bar{\mathbf{k}}_{12}:=\mathbf{k}^{1}_{12}-\mathbf{k}^{2}_{12}$:
\begin{equation}\label{vark2_bar}
    \begin{cases}
    \partial_{t} \bar{u}^{(II)}_{1}= d_{1}\Delta \bar{u}^{(II)}_{1}-\nabla \cdot [ \mu_{11} \int_{\mathbb{T}^n}\bar{\mathbf{k}}_{11}(x-y)dy +\mu_{12} \int_{\mathbb{T}^n}\bar{\mathbf{k}}_{12}(x-y)dy], & \quad \text{in}\  Q,\\ 
    \bar{u}^{(II)}_{1}(x,0)= 0, & \quad  \text{in} \ \mathbb{T}^{n}. 
  \end{cases}
\end{equation}

Once more, we multiply both sides of \eqref{vark2_bar} by the solution $\omega_1$ of $(-\partial_t-d_1\Delta)\omega_1=0$ and perform integration by parts. By substituting the CGO solution of $\omega_1$ and separating variables for the integrals, we obtain the following equation:
\begin{equation}\label{omega1separable_k12}
    \int_{\mathbb{T}^n} -\nabla \cdot \left[ \mu_{11} \int_{\mathbb{T}^n} \bar{\mathbf{k}}_{11}(x-y)dy +\mu_{12} \int_{\mathbb{T}^n} \bar{\mathbf{k}}_{12}(x-y)dy \right] e^{- \frac{i}{\sqrt{d_1}}\xi_1 \cdot x} dx=0.
\end{equation}

Denote $\textbf{(B)}:= \nabla \cdot [ \mu_{11} \int_{\mathbb{T}^n} \bar{\mathbf{k}}_{11}(x-y)dy +\mu_{12} \int_{\mathbb{T}^n} \bar{\mathbf{k}}_{12}(x-y)dy ]$. In order to satisfy \eqref{omega1separable_k12}, the condition $\textbf{(B)} = 0$ must hold, indicating 
\begin{equation}\label{unique_k12}
    \sum\limits_{j=1}^n\mu_{11} \int_{\mathbb{T}^n}\frac{d}{dx_j}\bar{\mathbf{k}}_{11,j}(x-y)dy+    \sum\limits_{j=1}^n\mu_{12} \int_{\mathbb{T}^n}\frac{d}{dx_j}\bar{\mathbf{k}}_{12,j}(x-y)dy=0.
\end{equation}

Since we have already verified that the normalization constant of $\bar{\mathbf{k}}_{11}$ is zero, and $\mu_{11}$ and $\mu_{12}$ are given constants, \eqref{unique_k12} only involves the normalization constant for $\bar{\mathbf{k}}_{12}$. Hence, it is evident that
\begin{equation}\label{equal_k12}
    \sum\limits_{j=1}^n\int_{\mathbb{T}^n}\frac{d}{dx_j}\mathbf{k}^{1}_{12,j}(x-y)dy=\sum\limits_{j=1}^n\int_{\mathbb{T}^n}\frac{d}{dx_j}\mathbf{k}^{2}_{12,j}(x-y)dy.
\end{equation}

Thus, we have successfully identified the normalization constant of $\mathbf{k}_{12}$. Following the recovery conclusion of $\mathbf{k}_{11}$, one can select $\mathbf{u}^{(I)}(x,t)=(1,0,\dots, 1,\dots,0)$, which is non-trivial only at the first and $j$-th $(i=3,4,\dots,N)$ positions, to recover the normalization constant of $\mathbf{k}_{1j}$. Similarly, considering the equation for $u^{(II)}_i$, it becomes evident that we can recover any $\mathbf{k}_{ij}$ $(i=2,\dots,N; j=1,\dots,N)$ once we have recovered $\mathbf{k}_{ii}$ using the first part of this proof for $\mathbf{k}$. Thus, the identifiability process is complete. \qed

\subsection{Application to the case of Corollary \ref{appthm_case1}}
 Corollary \ref{appthm_case1} can be directly derived from Theorem \ref{mainthm_DR} and \ref{mainthm_Inte1}. Indeed,

 \begin{proof}[Proof of Corollary \ref{appthm_case1}]

Based on Lemma \ref{wellposed_eco1}, the system \eqref{incon_lem1} possesses a unique, global, and non-negative solution. Consequently, for any initial function satisfying $\mathbf{(L_1)}$, the condition $\mathcal{M}^{+}_{\mathbf{d}^1}(\mathbf{f})=\mathcal{M}^{+}_{\mathbf{d}^2}(\mathbf{f})$ leads to $\mathbf{d}^1=\mathbf{d}^2$.
Moreover, for any $\mathbf{f}^l$ that satisfies $\mathbf{(L_1)}$ and has a known non-zero normalization constant for $\mathbf{K}$, the conditions specified in Corollary \ref{appthm_case1} align with the prerequisites of Theorem \ref{mainthm_Inte1}. Then if $\mathcal{M}^{+}_{\pmb{\mu}^1,\mathbf{K}^1}(\mathbf{f}^1)=\mathcal{M}^{+}_{\pmb{\mu}^2,\mathbf{K}^2}(\mathbf{f}^2)$, we can conclude that $\pmb{\mu}^1=\pmb{\mu}^2$. Conversely, if we know the non-zero advection coefficient $\pmb{\mu}$ and $\mathcal{M}^{+}_{\pmb{\mu}^1,\mathbf{K}^1}(\mathbf{f}^1)=\mathcal{M}^{+}_{\pmb{\mu}^2,\mathbf{K}^2}(\mathbf{f}^2)$, then the normalization constants for $\mathbf{K}^1$ and $\mathbf{K}^2$ are identical. 
 \end{proof}

{\centering \section{RECOVERY OF THE ADVECTION COEFFICIENT $\pmb{\nu}$ AND THE INTEGRAL KERNEL $\mathbf{w}$ IN THEOREM \ref{mainthm_Inte2} AND COROLLARY \ref{appthm_case2}}  \label{Proof_nuw} }
In this section, we will give the recovery of the advection coefficient $\pmb{\nu}$ and the integral kernel $\mathbf{w}$. The proofs are similar to those in Section \ref{Proof_muk}. We also verify the proof in the space $\mathbb{T}^n$. Before we begin, we give a crucial auxiliary lemma.

\begin{lem}\label{formu_interp}
Consider 
\begin{equation}\label{lem1_interp} 
\partial_t \mathbf{u}(x,t)-\mathbf{p}\Delta \mathbf{u}(x,t)= 0, \quad \text{in} \   Q,
\end{equation}
where $\mathbf{p}=(p_1,\dots,p_N)$ for constants $p_i$, $i=1,\dots,N$. Then there exists a sequence of periodic solutions $u_i(x,t)$ to the system \eqref{lem1_interp} such that $u_i(x,t)=e^{\lambda_i t} U_i(x;\lambda_i)$ for some non-trivial $\lambda_i \in \mathbb{R}^n$ and $U_i(x;\lambda_i) \in C^{2}(\mathbb{T}^{n}).$ In particular, $U_i(x;\lambda_i)=e^{i\xi\cdot x}$ with $|\xi|^2=\frac{\lambda_i}{p_i}$ is not necessarily a trivial function, and $\frac{\lambda_i}{p_i}$ is its corresponding eigenvalue. Furthermore, there does not exist an open subset $T$ of $\mathbb{T}^{n}$ such that $\nabla U_i(x;\lambda_i)=0$ in $T.$
\end{lem}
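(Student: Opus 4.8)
The plan is to construct the solutions explicitly by separation of variables, exploiting that \eqref{lem1_interp} decouples into the $N$ scalar heat equations $\partial_t u_i - p_i\Delta u_i = 0$, so it suffices to treat one index $i$ at a time. I would make the ansatz $u_i(x,t)=e^{\lambda_i t}U_i(x)$ and substitute it into the $i$-th equation; this reduces the PDE to the eigenvalue equation $\Delta U_i=(\lambda_i/p_i)U_i$ on $\mathbb{T}^n$, so that $U_i$ must be an eigenfunction of the Laplacian on the torus with eigenvalue $\lambda_i/p_i$.

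Next I would realize these eigenfunctions as plane waves. On the $n$-torus with the prescribed periodicity, the exponential $U_i(x)=e^{i\xi\cdot x}$ is periodic precisely when $\xi$ lies in the dual lattice, and for such $\xi$ one has $\Delta U_i=-|\xi|^2 U_i$; matching this against $\Delta U_i=(\lambda_i/p_i)U_i$ fixes the dispersion relation tying $\lambda_i$ to $|\xi|^2$ and determines $\lambda_i\in\mathbb{R}$ from any chosen $\xi$. For each nonzero lattice vector $\xi$ the function $u_i=e^{\lambda_i t}e^{i\xi\cdot x}$ is then a non-trivial, smooth (hence $C^2$), periodic solution, and letting $\xi$ range over the nonzero vectors of the dual lattice produces the asserted sequence. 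Here I would note that, since the forward equation imposes $\lambda_i=-p_i|\xi|^2$, the stated normalization $|\xi|^2=\lambda_i/p_i$ must be read with the sign convention fixed consistently with that of the companion test functions $\omega_i$ used in the other recovery arguments.

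The remaining, and only substantive, point is the non-degeneracy of the gradient. For the complex exponentials this is immediate: $\nabla U_i = i\xi\,e^{i\xi\cdot x}$ has constant modulus $|\xi|>0$ at every point of $\mathbb{T}^n$, so $\nabla U_i$ vanishes nowhere and \emph{a fortiori} there is no open subset $T\subset\mathbb{T}^n$ on which $\nabla U_i\equiv 0$; if real-valued solutions are wanted one passes to real or imaginary parts and instead invokes real-analyticity, since a real-analytic gradient that is not identically zero cannot vanish on an open set. The construction being elementary, the main thing to get right is not existence but this last property together with its purpose: it is exactly the non-vanishing of $\nabla U_i$ on every open set, as $\xi$ sweeps the whole dual lattice, that later lets these solutions act as test functions forcing a vanishing integral to imply the vanishing of the transformed kernel — and hence of the kernel itself — in the recovery of $\pmb{\nu}$ and $\mathbf{w}$ in Theorem \ref{mainthm_Inte2}. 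The two places where care is needed are the periodicity constraint selecting admissible $\xi$ and the sign bookkeeping in the dispersion relation.
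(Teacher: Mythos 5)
Your proposal is correct, and its skeleton is the same as the paper's: separation of variables reduces \eqref{lem1_interp} to the eigenvalue problem for the periodic Laplacian, and the plane waves $e^{i\xi\cdot x}$ over the dual lattice furnish the sequence of solutions. The differences are in the two places where the paper is loose and you are careful. First, the dispersion relation: the paper's proof takes $-\Delta U_i = \frac{\lambda_i}{p_i}U_i$ and then asserts that $e^{\lambda_i t}U_i$ solves the heat equation, but with that sign convention it does not (one gets $\partial_t u_i - p_i\Delta u_i = 2\lambda_i u_i$); the consistent relation is $\lambda_i = -p_i|\xi|^2$, exactly the sign bookkeeping you flag, so your remark is not pedantry but a genuine correction of the statement's normalization $|\xi|^2 = \lambda_i/p_i$. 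Second, the non-vanishing of $\nabla U_i$ on open sets: the paper argues abstractly — if $\nabla U_i \equiv 0$ on an open $T$ then $U_i$ is constant there, and "since $U_i$ is any eigenfunction we have a contradiction" — which as written is incomplete, since constants \emph{are} eigenfunctions (eigenvalue $0$), and for $\lambda_i \neq 0$ one still needs unique continuation (or analyticity of eigenfunctions) to pass from $U_i = 0$ on $T$ to $U_i \equiv 0$. Your direct computation $\nabla U_i = i\xi\, e^{i\xi\cdot x}$, of constant modulus $|\xi| > 0$, settles the claim pointwise for precisely the eigenfunctions the lemma (and the later recovery of $\pmb{\nu}$ and $\mathbf{w}$) actually uses, with the real-analyticity remark covering real parts. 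So: same route, but your version closes the gaps in the paper's own write-up.
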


\begin{proof} Let $\frac{\lambda_i}{p_i}$ be an eigenvalue of the periodic Laplacian  and $U_i(x;\lambda_i)$ be its eigenfunction. Then, $U_i(x;\lambda_i)$ satisfies the equation 
\begin{equation}\label{lem1p_interp} 
-\Delta U_i(x;\lambda_i)= \frac{\lambda_i}{p_i}U_i(x;\lambda_i),  \quad  \text{in}\   Q,
\end{equation}
It is obvious to see that $ u_i(x,t)=e^{\lambda_i t} U_i(x;\lambda_i) $ is a solution of \eqref{lem1_interp}. Furthermore, if we suppose that there is an open subset $T$ of $\mathbb{T}^{n}$ such that $\nabla U_i(x;\lambda_i)=0$ in $T$, then $U_i(x,\lambda_i)$ is a constant in $T.$ Since $U_i(x,\lambda_i)$ is any eigenfunction of $\Delta,$ we have a contradiction.

The proof is complete. 
\end{proof}

\subsection{Unique recovery for the Theorem \ref{mainthm_Inte2}}
With this lemma in hand, we can now proceed with the proof of  Theorem \ref{mainthm_Inte2}. We observe that the 
first order variation of \eqref{sec4model2} is in the form \eqref{1storder}, and therefore, as in Section \ref{integral_kRE}, $\textbf{u}^{1(I)}(x,t)=\textbf{u}^{2(I)}(x,t)=:\textbf{u}^{(I)}(x,t)$ when $\mathcal{M}^{+}_{\pmb{\nu}^1,\mathbf{w}^1}(\mathbf{f}^1)=\mathcal{M}^{+}_{\pmb{\nu}^2 \mathbf{w}^2}(\mathbf{f}^2)$. By assuming each element of $\pmb{d}=(d_1,d_2,\dots,d_{N})$ is a constant, it is apparent that the expression of $\textbf{u}^{(I)}$ in \eqref{unqiue_variI} satisfies an equation of the form \eqref{lem1_interp}. Therefore, based on Lemma \ref{formu_interp}, there exist
\[\pmb{\lambda}=(\lambda_{1},\lambda_{2},\dots,\lambda_{N}) \in \mathbb{R}^{n}, \quad \textbf{U}(x)=\big( U_1(x), U_2(x), \dots, U_N(x) \big) \in C^2(\mathbb{T}^{n}) \]
such that $e^{\pmb{\lambda} t}\textbf{U}(x)$ satisfies equations for $\textbf{u}^{(I)}(x,t)$ in \eqref{1storder}. And each $u^{(I)}_i$ satisfies:
\begin{equation}\label{CGO_uI}
    u^{(I)}_i(x,t)= e^{\lambda_{i} t}U_{i}(x).
\end{equation}

 In order to recover the advection coefficient $\pmb{\nu}$ and the integral kernel $\mathbf{w}$, it is necessary to consider the following second-order variation system for \eqref{sec4model2}:
\begin{equation}\label{var2w_general}
    \begin{cases}
    \partial_{t} u^{l(II)}_{i}= d_{i}\Delta u^{l(II)}_{i}-\nabla \cdot \left[u^{(I)}_{i} \sum\limits^{N}_{j=1}\nu^{l}_{ij} \nabla (w^{l}_{ij} \ast u^{(I)}_{j}) \right], & \quad \text{in}\  Q,\\ 
    u^{l(II)}_{i}(x,0)= 2f^{l}_{i,2}(x), & \quad  \text{in} \ \mathbb{T}^{n}. 
  \end{cases}
\end{equation}

\textbf{\textit{Recovery of the advection coefficient $\pmb{\nu}$} } 
The recovery process begins with the advection coefficients on the diagonal $\{\nu_{11}, \nu_{22},\dots, \nu_{NN}\}$. Assuming that the integral kernel $\mathbf{w}$ is a known non-trivial function, we can manipulate the input $\mathbf{f}_1$ such that $\mathbf{u}^{(I)}(x,t)=(e^{\lambda_{1} t}U_{1}(x),0,\dots,0)$, where $\lambda_1$ is a non-zero constant and $U_{1}(x)$ satisfies the Lemma \ref{formu_interp}. This choice results in the term  $\sum\limits^{N}_{j=1}\nu^{l}_{1j} \nabla(w_{1j} \ast u^{(I)}_{j})$ simplifying to $\nu^{l}_{11} \nabla(w_{11}\ast u^{(I)}_{1})$ as $u^{(I)}_{j}=0$ for $j=2,\dots,N$.  Consequently, \eqref{var2w_general} can be expressed as
\begin{equation}\label{var2_nu11}
    \begin{cases}
    \partial_{t} u^{l(II)}_{1}= d_{1}\Delta u^{l(II)}_{1}-\nabla \cdot [ \nu^{l}_{11} u^{(I)}_{1} \nabla  (w_{11}\ast u^{(I)}_{1})], & \quad \text{in}\  Q,\\ 
    u^{l(II)}_{1}(x,0)= 2f^{l}_{1,2}(x), & \quad  \text{in} \ \mathbb{T}^{n}. 
  \end{cases}
\end{equation}

Let $\hat{u}_1^{(II)}=u_1^{1(II)}-u_1^{2(II)},$ $\hat{\nu}_{11}=\nu_{11}^{1}-\nu_{11}^{2}$. Taking the difference of the equations for $l=1,2$ of \eqref{var2_nu11}, we obtain
  \begin{equation}\label{varnu_hat}
    \begin{cases}
    \partial_{t} \hat{u}^{(II)}_{1}= d_{1}\Delta \hat{u}^{(II)}_{1}-\nabla \cdot [ \hat{\nu}_{11} u^{(I)}_{1} \nabla  (w_{11}\ast u^{(I)}_{1})], & \quad \text{in}\  Q,\\ 
    \hat{u}^{(II)}_{1}(x,0)= 0, & \quad  \text{in} \ \mathbb{T}^{n}. 
  \end{cases}
  \end{equation}

Due to the property of convolution, we have 
\[\nabla (w_{11}\ast u^{(I)}_{1})= w_{11}\ast (\nabla u^{(I)}_{1}).\]

Let $\omega_{1}$ be the solution of $-\partial_{t} \omega_{1}-d_1 \Delta \omega_{1}=0$ in $Q$, if we multiply it on both sides of \eqref{varnu_hat} and perform integration by parts, we obtain
\begin{equation}\label{omega1_fornu11}
    \int_{Q} -\nabla \cdot [\hat{\nu}_{11} u^{(I)}_{1} (w_{11}\ast \nabla u^{(I)}_{1})]\omega_1 dxdt=0.
\end{equation}

By substituting the CGO solution $\omega_1=e^{-|\xi_1|^2 t- \frac{i}{\sqrt{d_1}}\xi_1 \cdot x}$ and the specific expression for $U_1(x)=e^{i \xi \cdot x}$, where $\vert \xi \vert^2=\frac{\lambda_1}{d_1}$ as given in Lemma \ref{lem1_interp}, into \eqref{omega1_fornu11} and then separating variables, we obtain
\begin{equation}\label{omega1separable_nu11}
    \int_{\mathbb{T}^n} \int_{\mathbb{T}^n}  \nabla \cdot  [\hat{\nu}_{11} e^{i \xi \cdot x} (w_{11}(y)i\xi e^{i \xi \cdot (x-y)})]  e^{- \frac{i}{\sqrt{d_1}}\xi_1 \cdot x} dydx=0.
\end{equation}
Expanding, this gives
\begin{align*}
    0&=\int_{\mathbb{T}^n} \int_{\mathbb{T}^n}  \nabla \cdot  [\hat{\nu}_{11} e^{i \xi \cdot x} (w_{11}(y)i\xi e^{i \xi \cdot (x-y)})]  e^{- \frac{i}{\sqrt{d_1}}\xi_1 \cdot x} dydx\\
    &=\int_{\mathbb{T}^n} \int_{\mathbb{T}^n} \sum_{j=1}^n (2i\xi_j)\cdot(i\xi_j) \hat{\nu}_{11} w_{11}(y) e^{i \xi \cdot(2x-y)}  e^{- \frac{i}{\sqrt{d_1}}\xi_1 \cdot x} dydx\\
    &=-\frac{2\lambda_1}{d_1}\int_{\mathbb{T}^n} \int_{\mathbb{T}^n} \hat{\nu}_{11} w_{11}(y) e^{i \xi \cdot(2x-y)}  e^{- \frac{i}{\sqrt{d_1}}\xi_1 \cdot x} dydx.
\end{align*}

Separating out the integral in $x$, \eqref{omega1separable_nu11} is equal to:
\begin{equation}\label{decide_nu11}
    \frac{2\lambda_1}{d_1} \hat{\nu}_{11} \int_{\mathbb{T}^n} e^{i \xi \cdot (-y)}w_{11}(y)dy =0.
\end{equation}

 Since we know that $\lambda_1$ is non-zero and $w_{11}$ is a known non-trivial function, \eqref{decide_nu11} implies that $\hat{\nu}_{11}=0$. In other words, we have
\begin{equation}\label{unique_nu11}
    \nu^{1}_{11}= \nu^{2}_{11}.
\end{equation}

 Moving forward, our objective is to determine $\nu_{1j}$ for $j=2,\dots,N$. We select  $\mathbf{f}_1$ such that $\mathbf{u}^{(I)}(x,t)=(e^{\lambda_{1} t}U_{1}(x),\dots,e^{\lambda_{j} t}U_{j}(x),\dots, 0)$, where the non-trivial values are only present in the first and $j$-th positions. In this case, the term $\sum\limits^{N}_{j=1}\nu^{l}_{1j} \nabla(w_{1j} \ast u^{(I)}_{j})$ simplifies to $\nu^{l}_{11} \nabla(w_{11}\ast u^{(I)}_{1})+\nu^{l}_{1j} \nabla(w_{1j}\ast u^{(I)}_{j})$. Consequently, \eqref{var2w_general} for $l=1,2$ can be expressed as
\begin{equation}\label{var2_nu1j}
    \begin{cases}
    \partial_{t} u^{l(II)}_{1}= d_{1}\Delta u^{l(II)}_{1}-\nabla \cdot [ \nu^{l}_{11} u^{(I)}_{1} \nabla  (w_{11}\ast u^{(I)}_{1})+\nu^{l}_{1j} \nabla(w_{1j}\ast u^{(I)}_{j})], & \quad \text{in}\  Q,\\ 
    u^{l(II)}_{1}(x,0)= 2f^{l}_{1,2}(x), & \quad  \text{in} \ \mathbb{T}^{n}. 
  \end{cases}
\end{equation}

And the equation for $\hat{u}^{(II)}_{1}$ is:
  \begin{equation}\label{varnu1j_hat}
    \begin{cases}
    \partial_{t} \hat{u}^{(II)}_{1}= d_{1}\Delta \hat{u}^{(II)}_{1}-\nabla \cdot [ \hat{\nu}_{11} u^{(I)}_{1} \nabla  (w_{11}\ast u^{(I)}_{1})+\hat{\nu}_{1j} u^{(I)}_{j} \nabla  (w_{1j}\ast u^{(I)}_{j})], & \quad \text{in}\  Q,\\ 
    \hat{u}^{(II)}_{1}(x,0)= 0, & \quad  \text{in} \ \mathbb{T}^{n}. 
  \end{cases}
  \end{equation}

Having established that $\hat{\nu}_{11}=0$, we can utilize the CGO solution of $\omega_1$ and the particular solution of $U_j$ to multiply both sides of \eqref{varnu1j_hat} and subsequently employ integration by parts. This leads to
\begin{equation}\label{omega1separable_nu1j}
    \int_{\mathbb{T}^n} \int_{\mathbb{T}^n}  \nabla \cdot  [\hat{\nu}_{1j} e^{i \xi \cdot x} (w_{1j}(y) i \xi e^{i \xi \cdot (x-y)})]  e^{- \frac{i}{\sqrt{d_1}}\xi_1 \cdot x} dydx=0.
\end{equation}
By simplification as done for $\nu_{11}$, this gives
\begin{equation}\label{decide_nu1j}
    \frac{2\lambda_j}{d_j} \hat{\nu}_{1j} \int_{\mathbb{T}^n} e^{i \xi \cdot (-y)}w_{1j}(y)dy =0.
\end{equation}

Considering the non-zero value of $\lambda_j$ and recognizing $w_{1j}$ as a non-trivial function, equation \eqref{decide_nu1j} indicates that $\hat{\nu}_{1j}=0$. In other words, we can conclude that:
\begin{equation}\label{unique_nu1j}
    \nu^{1}_{1j}= \nu^{2}_{1j}, \quad j=2,\dots,N.
\end{equation}

So far, we have successfully determined the advection coefficients for $\nu_{1j}$ (where $j=1,\dots,N$). The remaining advection coefficients $\nu_{ij}$ (where $i=2,\dots,N$ and $j=1,\dots,N$) can be obtained using a similar approach. We begin by establishing the uniqueness for $\nu_{ii}$ by selecting $\mathbf{f}_1$ such that $\mathbf{u}^{(I)}(x,t)=(0,\dots,e^{\lambda_i t}U_i(x),\dots,0)$, which is non-trivial only at the $i$-th position. Then, we modify $\mathbf{f}_1$ such that it becomes non-trivial at both the $i$-th and $j$-th positions, with the corresponding first order solutions being $e^{\lambda_i t}U_i(x)$ and $e^{\lambda_j t}U_j(x)$. Consequently, we arrive at the conclusion:
\begin{equation}\label{conclusion_nu}
\pmb{\nu}^{1}= \pmb{\nu}^{2}.
\end{equation}

\textbf{\textit{Recovery of the integral kernel $\mathbf{w}$} }
In this part, we assume that all the advection coefficients are known and are assumed to be non-zero. Following a similar approach to the recovery process of $\pmb{\nu}$, we select $\mathbf{f}_1$ such that $\mathbf{u}^{(I)}(x,t)=(0,\dots,e^{\lambda_i t}U_i(x),\dots,0)$ $(i=1,\dots,N)$, which is non-trivial only at the $i$-th position, in order to recover $w_{ii}$. The coefficient $\lambda_i$ is a known constant, and $U_i(x)=e^{i \xi \cdot (x)}, \vert \xi \vert^2= \frac{\lambda_i}{d_i}$ satisfies Lemma \ref{formu_interp} for the equation \eqref{1storder}. Consequently, the term $\sum\limits^{N}_{j=1}\nu_{ij} \nabla(w^{l}_{ij} \ast u^{(I)}_{j})$ only retains $\nu_{ii} \nabla(w^{l}_{ii}\ast u^{(I)}_{i})$, while the remaining terms are zero since $u^{(I)}_{j}(x,t)=0$ $(j=\{1,\dots,N\}\backslash \{i\})$. Thus, \eqref{var2w_general} can be transformed into
\begin{equation}\label{var2w_wkk}
    \begin{cases}
    \partial_{t} u^{l(II)}_{i}= d_{i}\Delta u^{l(II)}_{i}-\nabla \cdot [ \nu_{ii} u^{(I)}_{i} \nabla  (w^{l}_{ii}\ast u^{(I)}_{i})], & \quad \text{in}\  Q,\\ 
    u^{l(II)}_{i}(x,0)= 2f^{l}_{i,2}(x), & \quad  \text{in} \ \mathbb{T}^{n}. 
  \end{cases}
\end{equation}

For simplicity, we show the case for $i=1$. Let $\bar{\mathbf{u}}^{(II)}=\mathbf{u}^{1(II)}-\mathbf{u}^{2(II)},$ $\bar{w}_{11}=w^{1}_{11}-w^{2}_{11}.$ Applying the property of convolution, we have 
the following equation for $\bar{u}^{(II)}_1$:
\begin{equation}\label{var2w_barwkk}
    \begin{cases}
    \partial_{t} \bar{u}^{(II)}_1= d_1\Delta \bar{u}^{(II)}_1-\nabla \cdot [ \nu_{11} u^{(I)}_1 (\bar{w}_{11}\ast \nabla u^{(I)}_1) ], & \quad \text{in}\  Q,\\ 
    \bar{u}^{(II)}_1(x,0)= 0, & \quad  \text{in} \ \mathbb{T}^{n}. 
  \end{cases}
\end{equation}

Considering $\omega_1$ as the solution of $-\partial_{t} \omega-d_1 \Delta \omega=0$ in $Q$, if we multiply it on both sides of \eqref{var2w_barwkk} and perform integration by parts, we obtain
\begin{equation}\label{omegai_forwkk}
    \int_{Q} -\nabla \cdot [\nu_{11} u^{(I)}_1 (\bar{w}_{11}\ast \nabla u^{(I)}_1)]\omega_1 dxdt=0.
\end{equation}

By substituting the CGO solution $\omega_1=e^{-|\xi_1|^2 t- \frac{i}{\sqrt{d_1}}\xi_1 \cdot x}$ and the specific expression for $U_1(x)=e^{i \xi \cdot x}$, where $\vert \xi \vert^2=\frac{\lambda_1}{d_1}$, into \eqref{omegai_forwkk} and then separating variables, we obtain
\begin{equation}\label{omega1separable_wkk}
    \int_{\mathbb{T}^n} \int_{\mathbb{T}^n}  \nabla \cdot  [\nu_{11} e^{i \xi \cdot x} (\bar{w}_{11}(y) i\xi e^{i \xi \cdot (x-y)})] e^{- \frac{i}{\sqrt{d_1}}\xi_1 \cdot x} dy dx=0.
\end{equation}

After simplification as we did for the proof for $\nu_{11}$, \eqref{omega1separable_wkk} is equal to:
\begin{equation}\label{decide_wkk}
    \frac{2\lambda_1}{d_1} \nu_{11} \int_{\mathbb{T}^n} e^{i \xi \cdot (-y)}\bar{w}_{11}(y)dy =0.
\end{equation}

As we know $\lambda_1$ and $\mu_{11}$ are non-zero constants, \eqref{decide_wkk} indicates that $\bar{w}_{11}=0$, to another words, we have
\begin{equation}\label{unique_w11}
    w^{1}_{11}= w^{2}_{11}.
\end{equation}

With this approach, we can recover $w_{11},w_{22},\dots, w_{NN}$. Next, we modify $\mathbf{f}_1$ such that it becomes non-trivial at both the $i$-th and $j$-th positions, with the corresponding $\mathbf{u}^{(I)}$ being $e^{\lambda_i t}U_i(x)$ and $e^{\lambda_j t}U_j(x)$, in order to recover $w_{ij}$. Then, the term $\sum\limits^{N}_{j=1}\nu_{ij} \nabla(w^{l}_{ij} \ast u^{(I)}_i)$ now becomes $\nu_{ii} \nabla(w^{l}_{ii} \ast u^{(I)}_i)+\nu_{ij} \nabla(w^{l}_{ij} \ast u^{(I)}_j)$. And \eqref{var2w_general} becomes
\begin{equation}\label{var2w_wkm}
    \begin{cases}
    \partial_{t} u^{l(II)}_i= d_{i}\Delta u^{l(II)}_i-\nabla \cdot [ \nu_{ii} u^{(I)}_i \nabla  (w^{l}_{ii}\ast u^{(I)}_i)+\nu_{ij} u^{(I)}_j \nabla  (w^{l}_{ij}\ast u^{(I)}_j)], & \quad \text{in}\  Q,\\ 
    u^{l(II)}_i(x,0)= 2f^{l}_{i,2}(x), & \quad  \text{in} \ \mathbb{T}^{n}. 
  \end{cases}
\end{equation}

Notice that we have confirmed $\bar{w}_{ii}=0$, thus the equation for $\bar{u}^{(II)}_i$ is:
\begin{equation}\label{var2w_barwkm}
    \begin{cases}
    \partial_{t} \bar{u}^{(II)}_i= d_i\Delta \bar{u}^{(II)}_i-\nabla \cdot [ \nu_{ij} u^{(I)}_j (\bar{w}_{ij}\ast \nabla u^{(I)}_j) ], & \quad \text{in}\  Q,\\ 
    \bar{u}^{(II)}_i(x,0)= 0, & \quad  \text{in} \ \mathbb{T}^{n},
  \end{cases}
\end{equation} where $\bar{w}_{ij}$ denotes $w_{ij}^1-w_{ij}^2$.

For simplicity, we take $i=1$. 
By multiplying both sides of \eqref{var2w_barwkm} by the solution $\omega_1$ of $(-\partial_t-d_1\Delta)\omega_1=0$ and applying integration by parts, we have 
\begin{equation}\label{omegai_forwkm}
    \int_{Q} -\nabla \cdot [\nu_{1j} u^{(I)}_{2} (\bar{w}_{1j}\ast \nabla u^{(I)}_{2})]\omega_1 dxdt=0.
\end{equation}

Once again, we substitute the CGO-form of $\omega_1$ and specific expression for $U_j(x)=e^{i \xi \cdot x}$, where $\vert \xi \vert^2=\frac{\lambda_j}{d_j}$, into \eqref{omegai_forwkm}. This allows us to transform \eqref{omegai_forwkm} into
\begin{equation}\label{omega1separable_wkm}
    \int_{\mathbb{T}^n} \int_{\mathbb{T}^n}  \nabla \cdot  [\nu_{1j} e^{i \xi \cdot x} (\bar{w}_{1j}(y) i \xi e^{i \xi \cdot (x-y)})]  e^{- \frac{i}{\sqrt{d_1}}\xi_1 \cdot x} dydx=0.
\end{equation}

Separating variables, the problem now turns to
\begin{equation}\label{decide_wkm}
    \frac{2\lambda_j}{d_j} \nu_{1j} \int_{\mathbb{T}^n} e^{i \xi \cdot (-y)}\bar{w}_{1j}(y)dy =0.
\end{equation}

Since $\lambda_j$ and $\nu_{1j}$ are non-zero constants, \eqref{decide_wkm} tells that $\bar{w}_{1j}=0$, to another words, we have
\begin{equation}\label{unique_w1j}
    w^{1}_{1j}= w^{2}_{1j} \quad, j=\{1,\dots,N\}\backslash \{1\}.
\end{equation}

By sequentially considering $i=1,\dots,N$ and $j=1,\dots,N$, we can fully recover $\mathbf{w}$ and reach the conclusion that $\mathbf{w}^1=\mathbf{w}^2$. This completes the recovery process. \qed

\subsection{Application to the case of Corollary \ref{appthm_case2}}

Similarly, we can apply the conclusion from the Theorem \ref{mainthm_DR} and \ref{mainthm_Inte2} to prove Corollary \ref{appthm_case2}.

\begin{proof}

Based on Lemma \ref{wellposed_eco2}, the system \eqref{appthm_eq} possesses a unique, global, non-negative solution. Consequently, for any initial function satisfying $\mathbf{(I_1)}$, the condition $\mathcal{M}^{+}_{\mathbf{d}^1}(\mathbf{f})=\mathcal{M}^{+}_{\mathbf{d}^2}(\mathbf{f})$ leads to $\mathbf{d}^1=\mathbf{d}^2$.
Then, for any $\mathbf{f}$ satisfying $\mathbf{(I_1)}$, if we have knowledge of the integral kernel $\mathbf{W}$ and $\mathcal{M}^{+}_{\pmb{\nu}^1,\mathbf{W}^1}(\mathbf{f}^1)=\mathcal{M}^{+}_{\pmb{\nu}^2,\mathbf{W}^2}(\mathbf{f}^2)$, we arrive at the conclusion  $\pmb{\nu}^1=\pmb{\nu}^2$. Similarly, if the advection coefficient $\pmb{\nu}$ is given and $\mathcal{M}^{+}_{\pmb{\nu}^1,\mathbf{W}^1}(\mathbf{f}^1)=\mathcal{M}^{+}_{\pmb{\nu}^2,\mathbf{W}^2}(\mathbf{f}^2)$ is known, we can conclude $\mathbf{W}^1=\mathbf{W}^2$.

\end{proof}

\bibliographystyle{plain}
\bibliography{refer_sec}

\end{document}